\documentclass[11pt]{article}
\usepackage[margin=1in]{geometry}
\usepackage{amsmath,amssymb,amsthm}
\usepackage{graphicx}
\usepackage[colorlinks=true, linkcolor=blue, citecolor=blue, urlcolor=blue]{hyperref}
\usepackage{cleveref}
\usepackage{bm}
\usepackage{subcaption}
\usepackage{booktabs}
\graphicspath{{images_app1/}{Images/}}
\usepackage{algorithm}
\usepackage{amsfonts}
\usepackage{stmaryrd}
\usepackage{dsfont}
\usepackage{xfrac}
\usepackage{mathrsfs}
\usepackage{algpseudocode}
\usepackage{adjustbox}
\usepackage[english]{babel}
\usepackage{csquotes}
\usepackage[
  sortcites=true,
  style=numeric-comp,
  backend=biber,
  sorting=nyt,
  maxnames=20,
  minnames=1,
  giveninits=true,
  doi=true,
  url=true
]{biblatex}
\usepackage{tikz}
\usepackage{pgfplots}
\usepackage{pgfplotstable}
\pgfplotsset{compat=1.18}
\usepgfplotslibrary{groupplots,units,colormaps}
\usetikzlibrary{arrows.meta}

\theoremstyle{remark}
\newtheorem{remark}{Remark}[section]
\theoremstyle{plain}
\newtheorem{theorem}{Theorem}[section]
\newtheorem{lemma}[theorem]{Lemma}
\newtheorem{proposition}[theorem]{Proposition}
\newtheorem{assumption}{Assumption}

\newcommand{\R}{\mathbb{R}}
\newcommand{\bs}[1]{\boldsymbol{#1}}

\newcommand{\dx}{\,\mathrm{d}\mathbf{x}}
\newcommand{\bx}{\mathbf{x}}
\newcommand{\bn}{\mathbf{n}}

\newcommand{\bX}{\mathbf{X}}
\newcommand{\bxi}{\boldsymbol{\xi}}

\newcommand{\Hrel}{H_{\rm rel}}
\newcommand{\Ieff}{I_{\rm eff}}

\newcommand{\Gammain}{\Gamma_{\mathrm{in}}}
\newcommand{\Gammaout}{\Gamma_{\mathrm{out}}}
\newcommand{\btheta}{\boldsymbol{\theta}}
\newcommand{\norm}[1]{\left\lVert #1 \right\rVert}

\makeatletter
\let\@fnsymbol\@arabic
\makeatother
\begin{document}
\title{Parametric Neural $r$-Adaptivity for Isogeometric Analysis via Residual Minimization}
\author{
  El\'ias Caru\textsuperscript{2,1} \and
  David Pardo\textsuperscript{1,2,3} \and
  Judit Mu\~noz-Matute\textsuperscript{1,3}
}
\date{}

\maketitle

\footnotetext[1]{Universidad del Pa\'is Vasco/Euskal Herriko Unibertsitatea (UPV/EHU), Leioa, Spain}
\footnotetext[2]{Basque Center for Applied Mathematics (BCAM), Bilbao, Spain}
\footnotetext[3]{Basque Foundation for Science (Ikerbasque), Bilbao, Spain}

\begin{abstract} 

\noindent We propose an $r$-adaptive neural algorithm for Isogeometric Analysis (IGA) based on residual minimization. The boundary-value problem is solved using a standard conforming Galerkin formulation, while a neural network relocates the interior knots. A strong-form residual in the sense of physics-informed neural networks (PINNs) controls a norm stronger than the energy ($H^1$) error. We therefore weight it by classical \textit{a posteriori} theory: element residuals scaled by the local mesh size, interface flux jumps, and Neumann boundary residuals yield a computable estimator of the energy error, which we minimize with respect to the knots. For coercive problems on admissible mesh families, this estimator is reliable and locally efficient up to oscillation terms; beyond that regime, the same loss remains well-defined and extends differentiable $r$-adaptivity to indefinite and advection-dominated problems. In the parametric setting, the network maps each parameter to a knot-density function in a single evaluation; since it outputs a density rather than a fixed-dimensional vector of knot locations, one trained network produces an admissible mesh at any refinement level. Mesh gradients are obtained by reverse-mode automatic differentiation through the discrete solution equation. Numerical experiments in one and two dimensions illustrate that the method concentrates degrees of freedom near singularities, material interfaces, and boundary layers, improving accuracy for a fixed number of degrees of freedom.
\end{abstract}

\section{Introduction}
\label{sec:intro}

Solutions of partial differential equations (PDEs) often exhibit localized features: sharp gradients, boundary layers, material interfaces, or corner singularities. Uniform refinement resolves such features at a high computational cost; adaptive methods instead introduce degrees of freedom (DOFs) where they are required. Classical $h$- and $p$-adaptivity modify the mesh size or the polynomial degree, while $r$-adaptivity keeps the number of DOFs fixed and redistributes the mesh points~\cite{bangerth2003adaptive,huang2010adaptive,budd2009adaptive}. 

Isogeometric Analysis (IGA)~\cite{hughes2005isogeometric,bazilevs2006isogeometric,cottrell2009isogeometric} provides a natural setting for $r$-adaptivity: its discrete spaces are constructed from knot vectors, and relocating interior knots modifies the local resolution without altering the dimension of the space or the tensor-product structure. Most existing $r$-adaptive IGA methods rely on monitor functions or mesh-quality criteria, such as Winslow mappings~\cite{xu2019efficient}. Artificial neural networks have also been employed in this context~\cite{rios2024adaptive}: a network relocates the inner control points of a multi-patch parametrization, trained against a mesh-quality measure. In such approaches, the mesh is optimized with respect to a geometric criterion rather than the error of the discrete solution it produces.

In this work, a neural network determines the interior knot positions, while the discrete solution is computed by a standard conforming Galerkin IGA solver; the network does not replace the solver. Training the network by gradient descent requires a differentiable loss function that quantifies the error of this Galerkin solution. The appropriate requirement is equivalence: the loss should be equivalent to the energy ($H^1$) error of the discrete solution, so that minimizing the former reduces the latter. For symmetric positive definite (SPD) problems, the Ritz energy satisfies this requirement and is the natural choice. For non-SPD problems, the natural candidate is the strong-form PDE residual minimized by physics-informed neural networks (PINNs)~\cite{raissi2019physics}. This residual, however, is not equivalent to the $H^1$ error: it controls a norm stronger than the energy norm, as it involves the highest-order derivatives of the error rather than first-order ones. A mesh optimized under this loss is consequently graded for the wrong error measure. The mismatch is most pronounced precisely where adaptivity is most needed: singular solutions lack the regularity required for the strong residual to be well defined, and the loss cannot certify convergence in the norm of interest.

We resolve this norm mismatch through classical \textit{a posteriori} error estimation~\cite{ainsworth2000posteriori,verfurth2013posteriori}. Weighting each element residual by the local mesh size and incorporating the interface flux jumps and boundary residuals yields a fully computable estimator of the energy error, which is reliable and locally efficient in the coercive regime~\cite{rojas2021goaloriented,los2021igrm}, evaluable by standard element-wise quadrature, and differentiable with respect to the knot positions. This estimator constitutes our training loss and the resulting $r$-adaptivity applies beyond symmetric coercive problems, including the indefinite and advection-dominated cases considered in this paper.

Alternative differentiable loss functions address the mismatch only partially. Ritz-based $r$-adaptivity minimizes the discrete energy, which targets the correct norm but presupposes a minimization principle, restricting the approach to symmetric coercive problems~\cite{magueresse2025energy,aballay2025radaptivefiniteelementmethod}. Dual-norm residual losses~\cite{kharazmi2021hp,taylor2023deepfourier,uriarte2023doubleritz,rojas2024rvpinn,taylor2025dfrmaxwell} also target the correct norm, but the dual norm is not directly computable: it is defined as a supremum over an infinite-dimensional test space and therefore cannot be evaluated exactly. In practice, the supremum is approximated by its restriction to a finite-dimensional test space. This approximation introduces additional considerations: the accuracy of the resulting loss depends on the choice of the discrete test space, the inversion of the associated Gram matrix is required, and the loss represents the true dual norm only up to the test-space discretization and its numerical integration. Learned mesh-movement networks~\cite{song2022m2n,foucart2023drlamr} are trained on supervision or reward signals rather than on an error estimate, and neural solvers with moving meshes~\cite{omella2024radapt} represent the solution in a nonlinear trial space, forgoing the approximation guarantees of a conforming Galerkin method.

In the parametric setting, a single network is sought that predicts an adapted mesh for every problem parameter, avoiding a separate optimization per instance; this is the setting of~\cite{aballay2025radaptivefiniteelementmethod}, where the network regresses the node positions from the problem parameter. A separate question is how the mesh is represented at the network output. A fixed-length vector of node positions ties the architecture to a single refinement level: a mesh with a different number of elements requires a new network and a new training procedure. We instead predict a knot \emph{density}: a continuous function over the domain whose sampling at any resolution yields an admissible mesh, so that one trained network serves every refinement level and enables the coarse-to-fine continuation strategy employed throughout.

The main contributions of this paper are twofold.
\begin{enumerate}
\item We employ a classical \textit{a posteriori} estimator of the energy ($H^1$) error as the training loss for $r$-adaptivity: element residuals weighted by the local mesh size, together with interface flux jumps and boundary residuals. We prove that this loss is reliable and locally efficient in the coercive regime, and its construction extends $r$-adaptivity beyond symmetric coercive problems.
\item We introduce the \emph{residual-informed neural mesh} for the parametric setting: the network outputs a knot \emph{density} rather than knot positions, which decouples the trained network from the refinement level.
\end{enumerate}

The implementation is differentiable end to end: exact gradients of the loss with respect to the knots are obtained through the discrete adjoint, which reverse-mode automatic differentiation realizes on the linear solve at the cost of one additional solve. The method is validated on five parametric benchmarks, including an indefinite Helmholtz transmission problem, boundary layers, and a re-entrant corner. Regarding the scope of this work, the results are restricted to coercive diffusion--reaction and advection--diffusion--reaction problems, for which the conforming Galerkin method is stable; the boundary-layer and Helmholtz examples are considered as fixed-DOF error-reduction tests and do not imply robustness with respect to the perturbation parameter. The geometry map remains fixed throughout: only the knot distribution inside the domain is adapted, not the shape of the domain.

The remainder of the paper is organized as follows. Section~\ref{sec:problem_setting} introduces the model problem and the IGA discretization. Section~\ref{sec:proposed_approach} defines the residual loss and the discrete-adjoint gradient. Section~\ref{sec:parametric_inn} extends the method to parametric problems through the residual-informed neural mesh. Section~\ref{sec:numerical_examples} presents the numerical experiments, and Section~\ref{sec:conclusions} draws the conclusions.

\section{Problem setting}
\label{sec:problem_setting}

\subsection{Linear PDE model problem}
Let \(\Omega\subset \mathbb{R}^d\) (\(d\in\{1,2\}\)) be an open bounded Lipschitz domain with boundary \(\partial\Omega=\overline{\Gamma}_D\cup\overline{\Gamma}_N\) and \(\Gamma_D\cap\Gamma_N=\emptyset\), where \(\Gamma_D\) and \(\Gamma_N\) denote the Dirichlet and Neumann parts, respectively (\(\Gamma_N\) possibly empty). We are given coefficient fields
\[
\sigma \in L^\infty(\Omega), \qquad 
\bs{\beta}\in [W^{1,\infty}(\Omega)]^d, \qquad 
\alpha\in L^\infty(\Omega),
\]
with \(\sigma\) uniformly positive and piecewise \(W^{1,\infty}\), 
i.e., there exist constants \(0<\sigma_{\min}\le\sigma_{\max}\) 
such that
\[
\sigma_{\min}\le \sigma(\bx)\le \sigma_{\max} 
\quad \text{for a.e.}\ \bx\in\Omega.
\]
Moreover, let \(f:\Omega\to\R\), \(u_D:\Gamma_D\to\R\), and 
\(g:\Gamma_N\to\R\) denote the source term, Dirichlet data, and 
Neumann data, respectively. We consider the following boundary-value problem
\begin{equation}
\label{eq:general_pde}
\left\{
\begin{aligned}
- \nabla \cdot (\sigma \nabla u) + \bs{\beta}\cdot\nabla u + \alpha u &= f, && \text{in } \Omega,\\
u &= u_D, && \text{on } \Gamma_D,\\
\sigma \dfrac{\partial u}{\partial \bn} &= g, && \text{on } \Gamma_N.
\end{aligned}
\right.
\end{equation}
Here \(\bn\) denotes the outward unit normal vector on \(\partial\Omega\) and \(\partial_{\bn}u:=\nabla u\cdot \bn\). For advection--diffusion problems, the inflow and outflow boundaries are defined by
\[
\Gammain := \{\bx\in\partial\Omega:\ \bs{\beta}(\bx)\cdot \bn(\bx)<0\},
\qquad
\Gammaout := \partial\Omega\setminus\overline{\Gammain},
\]
and we assume \(\Gammain\subseteq \Gamma_D\).

\subsection{Weak formulation}
We assume \(f\in L^2(\Omega)\), \(g\in H^{-1/2}(\Gamma_N)\), and \(u_D\in H^{1/2}(\Gamma_D)\). Let \(\widetilde u_D\in H^1(\Omega)\) be a lifting of the Dirichlet data, i.e., \(\widetilde u_D|_{\Gamma_D}=u_D\) in the sense of traces, and define
\[
V:=H^1_{0,D}(\Omega)=\{v\in H^1(\Omega): v|_{\Gamma_D}=0\}.
\]
The standard variational formulation of \eqref{eq:general_pde} reads: find \(u_0\in V\) such that
\begin{equation}
\label{eq:weak_form}
B(u_0,v)=F(v)-B(\widetilde u_D,v) \qquad \forall\, v\in V,
\end{equation}
and set \(u:=u_0+\widetilde u_D\) (so that \(B(u,v)=F(v)\) for all \(v\in V\)), where
\[
B(w,v):=(\sigma\nabla w,\nabla v)_\Omega + (\bs{\beta}\cdot\nabla w,v)_\Omega + (\alpha w,v)_\Omega,
\qquad
F(v):=(f,v)_\Omega + \langle g,v\rangle_{\Gamma_N}.
\]
Here \((\cdot,\cdot)_\Omega\) denotes the \(L^2(\Omega)\) inner product and \(\langle\cdot,\cdot\rangle_{\Gamma_N}\) is the duality pairing between \(H^{-1/2}(\Gamma_N)\) and \(H^{1/2}(\Gamma_N)\). When \(g\in L^2(\Gamma_N)\), this pairing reduces to the boundary integral
\[
\langle g,v\rangle_{\Gamma_N} = \int_{\Gamma_N} g\, v\, \mathrm{d}\Gamma.
\]

\subsection{Mesh parametrization}
\label{sec:mesh_param}
We describe the one-dimensional construction on the reference domain \(\widehat\Omega=[0,1]\); the multidimensional case follows by tensor products of independent univariate partitions.

\paragraph{Fixed interfaces and segments.}
Let \(A=\{a_0<a_1<\cdots<a_S\}\) be ordered fixed points: \(a_0,a_S\) are the domain boundaries, and interior interfaces encode fixed constraints such as material interfaces or boundary-condition region separators. On each segment \([a_{s-1},a_s]\) of length \(L_s:=a_s-a_{s-1}\), we prescribe \(n_{\mathrm{el}}^{(s)}\ge 1\) elements, subject to \(L_s > n_{\mathrm{el}}^{(s)}\,h_{\min}\), where \(h_{\min}>0\) prevents element collapse.

\paragraph{Element-size parametrization.}
For each segment \(s\), a vector \(\bs\theta^{(s)}\in\R^{n_{\mathrm{el}}^{(s)}}\) is mapped by the softmax function to element-size proportions summing up to one, \(\bs\delta^{(s)} = \mathrm{softmax}(\bs\theta^{(s)})\), as in recent differentiable \(r\)-adaptive parametrizations~\cite{aballay2025radaptivefiniteelementmethod,magueresse2025energy}. In this non-parametric section, the \(\bs\theta^{(s)}\) are the optimization variables; in the parametric setting of Section~\ref{sec:parametric_inn}, they are produced by a neural network whose weights are the trainable quantities. The physical element sizes are
\[
h_i^{(s)}(\btheta) := h_{\min} + \bigl(L_s - n_{\mathrm{el}}^{(s)}\,h_{\min}\bigr)\,\delta_i^{(s)},
\qquad i=1,\ldots,n_{\mathrm{el}}^{(s)},
\]
a smooth map that enforces \(\sum_i h_i^{(s)} = L_s\) and \(h_i^{(s)} \ge h_{\min}\).

\begin{figure}[h!]
\centering
\includegraphics[width=\linewidth]{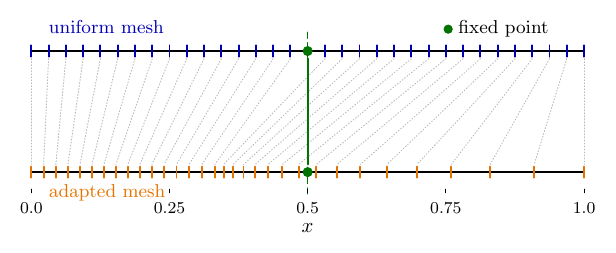}
\caption{Knot redistribution for the one-dimensional contrast Helmholtz problem (free-split treatment, Section~\ref{subsec:inn_mesh_predictor}): a uniform partition (top, blue) and the adapted partition (bottom, orange), with connectors tracing each interior breakpoint. The material interface at \(x=0.5\) (green) stays pinned at its prescribed multiplicity, while the number of elements on each side is governed by \(\btheta\).}
\label{fig:knot_redistribution}
\end{figure}

\subsection{Isogeometric spline space}
\label{sec:IGA_techniques}
Let \(K(\btheta)=\{k_j(\btheta)\}_{j=0}^{n_e}\) be the breakpoint vector of Section~\ref{sec:mesh_param}, \(p\ge 1\) the degree, and \(c\in\{0,\ldots,p-1\}\) the interior continuity. With interior knot multiplicity \(m:=p-c\), the open knot vector \(\Xi(\btheta)\) repeats each interior breakpoint \(m\) times and each endpoint \(p+1\) times. Repeating the endpoints (an open, or clamped, knot vector) makes the basis interpolatory at the boundaries---the first and last basis functions equal one there---so Dirichlet conditions can be imposed directly at the endpoint coefficients. The degree-\(p\) B-spline basis is then built from \(\Xi(\btheta)\) by the Cox--De Boor recursion~\cite{deboor2001practical,hughes2005isogeometric,cottrell2009isogeometric}, spanning \(\mathbb S_{p,c}(K(\btheta)):=\mathrm{span}\{B_i^p(\cdot;\btheta)\}_{i=0}^n\) of dimension \(n+1\), \(n = p + (n_e-1)m\). In \(d\) dimensions, tensor products give a basis \(\{\widehat R_{\mathbf i}(\cdot;\btheta)\}_{\mathbf i\in\mathcal I}\) on \(\widehat\Omega=[0,1]^d\). A fixed geometry map
\begin{equation}
\label{eq:geometry_map}
\bX:\widehat\Omega\to\Omega,
\qquad
\bx=\bX(\bxi),
\end{equation}
patchwise \(C^{1,1}\) (hence \(W^{2,\infty}\)) and bi-Lipschitz on patch interiors---in all experiments below, \(\bX\) is the identity---is given and kept fixed throughout: the reference breakpoints move during adaptation, but the physical boundary \(\partial\Omega\) does not. The physical mesh \(\mathcal T_h(\btheta)\) is the image of the parametric partition under \(\bX\), and with the push-forward \(N_{\mathbf i}(\bx;\btheta):=\widehat R_{\mathbf i}(\bX^{-1}(\bx);\btheta)\) the discrete spaces are
\[
V_h(\btheta):=\mathrm{span}\{N_{\mathbf i}(\cdot;\btheta)\}_{\mathbf i\in\mathcal I}\subset H^1(\Omega),
\qquad
V_h^0(\btheta):=V_h(\btheta)\cap V.
\]

\subsection{Galerkin discretization}
\label{sec:galerkin}
For a fixed admissible parameter $\btheta$, the Galerkin solution is $u_\theta=\widetilde u_{D,h}(\btheta)+u_\theta^0$, with $u_\theta^0\in V_h^0(\btheta)$, satisfying
\begin{equation}
\label{eq:discrete_weak_form}
B(u_\theta,v_h)=F(v_h)
\qquad \forall v_h\in V_h^0(\btheta).
\end{equation}
Restricting~\eqref{eq:weak_form} to the spline space $V_h^0(\btheta)$, we expand the unknown homogeneous component $u_\theta^0$ in the unconstrained basis functions and test with the same basis. The degrees of freedom associated with the Dirichlet data are fixed through the lifting $\widetilde u_{D,h}(\btheta)$, and their contributions are incorporated into the right-hand side. This gives the reduced algebraic system
\begin{equation}
\label{eq:linear_system}
\mathbf K(\btheta)\mathbf U(\btheta)=\mathbf F(\btheta).
\end{equation}
Here, for the basis functions $N_i$ associated with the free degrees of freedom,
\begin{equation}
\label{eq:matrix-dis}
\begin{aligned}
K_{ij}(\btheta)
&=\int_\Omega
\Bigl(
\sigma\nabla N_j(\btheta)\cdot\nabla N_i(\btheta)
+(\bs\beta\cdot\nabla N_j(\btheta))N_i(\btheta)
+\alpha N_j(\btheta)N_i(\btheta)
\Bigr)\dx,\\
F_i(\btheta)
&=(f,N_i(\cdot;\btheta))_\Omega
+\langle g,N_i(\cdot;\btheta)\rangle_{\Gamma_N}
-B\bigl(\widetilde u_{D,h}(\btheta),N_i(\cdot;\btheta)\bigr).
\end{aligned}
\end{equation}
The dependence on $\btheta$ enters through the basis functions, the quadrature points, and the element measures.

\section{\texorpdfstring{\(r\)-Adaptivity}{r-Adaptivity} via a 
differentiable residual-based objective}
\label{sec:proposed_approach}

\subsection{Residual-based loss}
\label{sec:loss}

The weak solution of~\eqref{eq:general_pde} may belong only to \(H^1(\Omega)\), so its strong-form residual is not globally defined in \(L^2(\Omega)\). The quantity of interest is instead the dual norm of the residual functional \(\mathfrak R_h(\btheta)\in V'\) of the discrete solution, which we estimate by a computable, mesh-dependent quantity built from strong residuals evaluated on the discrete spline solution. Since \(u_\theta\) is smooth inside each element, these residuals are well defined elementwise, and interface terms are added wherever the normal flux may be discontinuous. Residual minimization to drive mesh adaptation has been studied in the finite-element setting~\cite{rojas2021goaloriented,los2021igrm}; here it furnishes the objective for the knot positions.

\paragraph{Local residual components.}
We define the residual functional $\mathfrak R_h(\btheta)\in V'$ by
\begin{equation}
\label{eq:residual_functional}
\langle\mathfrak R_h(\btheta),v\rangle
:=F(v)-B(u_\theta,v),
\qquad v\in V.
\end{equation}
Galerkin orthogonality gives $\langle\mathfrak R_h(\btheta),v_h\rangle=0$ for all $v_h\in V_h^0(\btheta)$. For each element $E\in\mathcal T_h(\btheta)$, we define the strong residual as
\begin{equation}
\label{eq:element_residual}
\mathcal R_E[u_\theta]
:=f+\nabla\cdot(\sigma\nabla u_\theta)
-\bs\beta\cdot\nabla u_\theta
-\alpha u_\theta
\qquad \text{in }E.
\end{equation}
Let $\mathcal B_{\rm int}^\star(\btheta)$ be the set of interior faces where the normal flux may jump, for instance $C^0$ knot lines or material interfaces. If $I=\partial E^+\cap\partial E^-$ and $\bn^\pm$ are the outward normals of $E^\pm$, we use the sign convention
\begin{equation}
\label{eq:jump_residual}
\mathcal J_I[u_\theta]
:=-\bigl(\sigma^+\nabla u_\theta^+\cdot\bn^+
+\sigma^-\nabla u_\theta^-\cdot\bn^-\bigr).
\end{equation}
When the spline space is $C^{p-1}$ across a coefficient-homogeneous interface and $p\ge 2$, the flux jump in~\eqref{eq:jump_residual} vanishes. For a Neumann face $B\subset\partial E\cap\Gamma_N$, we define
\begin{equation}
\label{eq:neumann_residual}
\mathcal N_B[u_\theta]
:=g-\sigma\nabla u_\theta\cdot\bn.
\end{equation}

\begin{lemma}[Residual representation]
\label{lem:residual_representation}
For every \(v\in V\),
\begin{equation}
\label{eq:residual_representation}
\langle \mathfrak R_h(\btheta),v\rangle
=
\sum_{E\in\mathcal T_h(\btheta)} 
\int_E \mathcal R_E[u_{\theta}]\,v\,\dx
+
\sum_{I\in\mathcal B_{\mathrm{int}}^\star(\btheta)} 
\int_I \mathcal J_I[u_{\theta}]\,v\,\mathrm{d}S
+
\sum_{B\in\mathcal B_N(\btheta)} 
\int_B \mathcal N_B[u_{\theta}]\,v\,\mathrm{d}S.
\end{equation}
\end{lemma}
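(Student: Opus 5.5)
The plan is to start from the definition \eqref{eq:residual_functional}, $\langle\mathfrak R_h(\btheta),v\rangle=F(v)-B(u_\theta,v)$, and integrate the diffusion term by parts element by element. On each $E\in\mathcal T_h(\btheta)$ the discrete solution $u_\theta$ is the push-forward of a polynomial (tensor-product) function under the patchwise $C^{1,1}$ map $\bX$. The coefficient $\sigma$ is $W^{1,\infty}$ on each element, provided that material interfaces are aligned with the fixed points $A$ and hence with element boundaries. Together these give $\sigma\nabla u_\theta|_E\in [W^{1,\infty}(E)]^d$, so $\nabla\cdot(\sigma\nabla u_\theta)\in L^2(E)$. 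Since $E$ is Lipschitz, being the bi-Lipschitz image of a box, Green's formula applies for $v\in H^1(E)$:
\[
(\sigma\nabla u_\theta,\nabla v)_E=-(\nabla\cdot(\sigma\nabla u_\theta),v)_E+\int_{\partial E}\sigma\nabla u_\theta\cdot\bn_E\,v\,\mathrm{d}S .
\]
The advection and reaction terms are kept as volume integrals. Summing over elements and adding $(f,v)_\Omega$ then produces exactly $\sum_E\int_E\mathcal R_E[u_\theta]\,v\dx$ by \eqref{eq:element_residual}.

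Next, I regroup the boundary integrals $-\sum_E\int_{\partial E}\sigma\nabla u_\theta\cdot\bn_E\,v$ face by face.

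\emph{Interior faces.} Since $v\in H^1(\Omega)$, its traces from both sides coincide. Each interior face $I=\partial E^+\cap\partial E^-$ therefore contributes $\int_I\mathcal J_I[u_\theta]\,v\,\mathrm{d}S$ with the sign convention \eqref{eq:jump_residual}. On faces not in $\mathcal B^\star_{\rm int}(\btheta)$ the flux is continuous: there the spline is $C^1$ and $\sigma$ is continuous, so the contribution vanishes and these faces may be dropped.

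\emph{Dirichlet faces.} These vanish because $v|_{\Gamma_D}=0$.

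\emph{Neumann faces.} Each $B\subset\Gamma_N$ contributes $-\int_B\sigma\nabla u_\theta\cdot\bn\,v$. Combined with $\langle g,v\rangle_{\Gamma_N}$ from $F$, this gives $\int_B\mathcal N_B[u_\theta]\,v$ as in \eqref{eq:neumann_residual}.

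This assumes $g\in L^2(\Gamma_N)$, so that the pairing is a boundary integral that splits over faces. For general $g\in H^{-1/2}$ I would state the identity with $\int_B g v$ read as the duality pairing restricted to $B$, or simply add the assumption $g\in L^2(\Gamma_N)$ as the paper implicitly does.

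The main technical point is justifying the elementwise Green formula and the face-by-face splitting with precisely the stated regularity. This needs three ingredients: that $\sigma$'s discontinuities lie on mesh faces (the fixed interfaces $A$); that the normal trace of $\sigma\nabla u_\theta$ is in $L^2(\partial E)$, which follows from $W^{1,\infty}$ regularity on $E$; and that the set of faces with nonzero jump is contained in $\mathcal B^\star_{\rm int}(\btheta)$. I would handle the last by defining $\mathcal B^\star_{\rm int}$ to contain every face where either $\sigma$ jumps or the continuity is $c=0$. On the remaining faces, $C^1$ continuity of the spline (composed with $\bX$, which is $C^{1,1}$ across interior faces within a patch) makes the jump identically zero. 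No Galerkin orthogonality is needed; the identity holds for all $v\in V$.
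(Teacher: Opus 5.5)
Your proposal is correct and follows the paper's proof essentially step for step: integrate the diffusion term by parts elementwise, pair the two contributions of each interior face into $\mathcal J_I$ via the sign convention, discard Dirichlet faces by the zero trace of $v$, and merge the Neumann faces with the load into $\mathcal N_B$. Your additional remarks make explicit points the paper's proof leaves implicit: the $W^{1,\infty}$ (rather than smooth) regularity of $\sigma\nabla u_\theta$ under a $C^{1,1}$ map, the assumption $g\in L^2(\Gamma_N)$ needed to split the pairing face by face, and the requirement that $\mathcal B^\star_{\rm int}(\btheta)$ contain every face where the flux can jump, which should also include patch interfaces where $\bX$ is only $C^0$.
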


\begin{proof}
Fix \(v\in V\). By~\eqref{eq:residual_functional},
\[
\langle\mathfrak R_h(\btheta),v\rangle
=(f,v)_\Omega+\langle g,v\rangle_{\Gamma_N}
-(\sigma\nabla u_\theta,\nabla v)_\Omega
-(\bs\beta\cdot\nabla u_\theta,v)_\Omega
-(\alpha u_\theta,v)_\Omega .
\]
Within each element \(E\in\mathcal T_h(\btheta)\), both \(u_\theta\) and \(\sigma\)
are smooth---the former as the push-forward of a polynomial under the
patchwise-smooth geometry map, the latter because coefficient interfaces lie on
faces of \(\mathcal B_{\rm int}^\star(\btheta)\)---so integrating
\((\sigma\nabla u_\theta,\nabla v)_E\) by parts elementwise and summing yields
\[
\langle\mathfrak R_h(\btheta),v\rangle
=\sum_{E\in\mathcal T_h(\btheta)}\int_E \mathcal R_E[u_\theta]\,v\,\dx
\;-\;\sum_{E\in\mathcal T_h(\btheta)}\int_{\partial E}
(\sigma\nabla u_\theta\cdot\bn_E)\,v\,\mathrm{d}S
\;+\;\langle g,v\rangle_{\Gamma_N},
\]
with \(\bn_E\) being the outward unit normal of \(E\) and \(\mathcal R_E\) as
in~\eqref{eq:element_residual}. We regroup the face sum. Each interior face
\(I=\partial E^+\cap\partial E^-\) is visited twice, with opposite outward normals
\(\bn^+=-\bn^-\), and its two contributions add up to
\(\int_I\mathcal J_I[u_\theta]\,v\,\mathrm{d}S\) by the sign
convention~\eqref{eq:jump_residual}; where the normal flux is continuous
(\(C^{p-1}\) knot lines with \(p\ge2\) and coefficient-homogeneous \(\sigma\)) the
integrand vanishes, so only the faces in \(\mathcal B_{\rm int}^\star(\btheta)\)
remain. Faces on \(\Gamma_D\) contribute nothing because \(v\) has zero trace
there, and on each face \(B\subset\Gamma_N\), the boundary term combines with the
Neumann load into \(\int_B\mathcal N_B[u_\theta]\,v\,\mathrm{d}S\)
by~\eqref{eq:neumann_residual}. Collecting terms gives~\eqref{eq:residual_representation}.
\end{proof}

\paragraph{Residual estimator and loss.}
Let $h_E$ be the diameter of an element, and let $h_I$ and $h_B$ denote the corresponding face sizes. Let
\[
\mu:=\alpha-\frac12\nabla\cdot\bs\beta,
\qquad
\sigma_E:=\operatorname*{ess\,inf}_{E}\sigma,
\qquad
\mu_E:=\operatorname*{ess\,inf}_{E}\mu.
\]
We use
\begin{equation}
\label{eq:rho_weight}
\rho_E:=
\begin{cases}
\min\{h_E/\sigma_E^{1/2},\,\mu_E^{-1/2}\}, & \mu_E>0,\\
h_E/\sigma_E^{1/2}, & \mu_E\le 0.
\end{cases}
\end{equation}
Under Assumption~\ref{ass:coercive_regime} one has \(\mu_E\ge0\), so the second branch is attained only in the boundary case \(\mu_E=0\). For an interior face shared by $E^+$ and $E^-$, set $\sigma_I:=\max(\sigma_{E^+},\sigma_{E^-})$; for a Neumann face $B\subset\partial E$, set $\sigma_B:=\sigma_E$. The estimator combines the three residual contributions from
Lemma~\ref{lem:residual_representation}, with the standard mesh- and
coefficient-dependent weights:
\begin{equation}
\label{eq:robust_estimator}
\eta^2(\btheta)
:=
\sum_{E\in\mathcal T_h(\btheta)}\rho_E^2
\norm{\mathcal R_E[u_\theta]}_{L^2(E)}^2
+
\sum_{I\in\mathcal B_{\rm int}^\star(\btheta)}
\frac{h_I}{\sigma_I}
\norm{\mathcal J_I[u_\theta]}_{L^2(I)}^2
+
\sum_{B\in\mathcal B_N(\btheta)}
\frac{h_B}{\sigma_B}
\norm{\mathcal N_B[u_\theta]}_{L^2(B)}^2.
\end{equation}
The optimization loss is
\begin{equation}
\label{eq:loss_estimator}
\mathcal L(\btheta):=\frac12\eta^2(\btheta).
\end{equation}
In one dimension, the faces are knots, so the face norms in~\eqref{eq:robust_estimator} reduce to point evaluations of the corresponding jumps, with the face weights taken as \(h_I:=\min(h_{E^+},h_{E^-})\) and \(h_B:=h_E\). We next state the conditions under which this estimator controls the energy error.

\begin{assumption}[Coercive regime]
\label{ass:coercive_regime}
We assume \(\Gammain\subseteq\Gamma_D\) and
\[
\mu := \alpha - \tfrac12 \nabla\cdot \bs\beta \ge 0
\qquad \text{a.e.\ in }\Omega.
\]
If \(\mu\equiv 0\), we further assume that \(\Gamma_D\) has 
positive measure, so that the energy 
seminorm~\eqref{eq:energy_norm} is a norm on~\(V\).
\end{assumption}

\begin{assumption}[Admissible mesh family]
\label{ass:admissible_mesh}
Let \(\Theta_{\mathrm{ad}}\subset\R^m\) denote the set of 
admissible mesh parameters. For every 
\(\btheta\in\Theta_{\mathrm{ad}}\):
\begin{enumerate}
\item all element sizes satisfy 
  \(h_E(\btheta)\ge h_{\min}>0\);
\item the mesh family 
  \(\{\mathcal T_h(\btheta)\}_{\btheta\in\Theta_{\mathrm{ad}}}\) 
  is uniformly shape-regular with constant 
  \(\gamma_{\mathrm{sh}}\) (in one dimension this condition is 
  automatically satisfied);
\item the geometry map~\eqref{eq:geometry_map} is bi-Lipschitz on
  each patch interior with constants independent
  of~\(\btheta\);
\item the mesh family is locally quasi-uniform: there exists
  \(\gamma_{\mathrm{loc}}\ge1\), independent of \(\btheta\), such that
  \(h_E\le\gamma_{\mathrm{loc}}\,h_{E'}\) whenever \(E\) and \(E'\)
  intersect the support of a common basis function.
\end{enumerate}
\end{assumption}

\noindent Under Assumption~\ref{ass:coercive_regime}, the bilinear 
form \(B\) is coercive on \(V\) with respect to the energy norm
\begin{equation}
\label{eq:energy_norm}
\|v\|_{\mathcal E}:=\left( 
\|\sigma^{1/2}\nabla v\|_{L^2(\Omega)}^2 
+ \|\mu^{1/2}v\|_{L^2(\Omega)}^2 
\right)^{1/2}.
\end{equation}
Indeed, for every \(v\in V\),
\begin{equation}
\label{eq:coercivity_identity}
B(v,v)
=
\|\sigma^{1/2}\nabla v\|_{L^2(\Omega)}^2
+
\|\mu^{1/2}v\|_{L^2(\Omega)}^2
+
\frac12 \int_{\Gammaout} 
(\bs\beta\cdot \bn)\,v^2\,\mathrm d\Gamma
\ge \|v\|_{\mathcal E}^2,
\end{equation}
because \(v=0\) on \(\Gamma_D\supseteq \Gammain\). In particular, the coercivity constant is exactly \(1\). We equip the
dual space \(V'\) with the norm induced
by~\(\|\cdot\|_{\mathcal E}\):
\[
\|\ell\|_{V'}:=\sup_{v\in V\setminus\{0\}}
\frac{\langle \ell,v\rangle}{\|v\|_{\mathcal E}}.
\]

\begin{proposition}[Reliability]
\label{prop:dual_reliability}
Under Assumptions~\ref{ass:coercive_regime} and~\ref{ass:admissible_mesh}, and assuming the discrete Dirichlet data are imposed exactly (\(u_{D,h}=u_D\) on \(\Gamma_D\), so that \(u-u_\theta\in V\)), there exists $C_{\rm rel}>0$, independent of $\btheta\in\Theta_{\rm ad}$, such that
\begin{equation}
\label{eq:global_reliability}
\|u-u_\theta\|_{\mathcal E}
\le \|\mathfrak R_h(\btheta)\|_{V'}
\le C_{\rm rel}\eta(\btheta).
\end{equation}
The constant depends only on the spline degree and continuity, the shape-regularity and geometry constants, and the bounds on the PDE coefficients. If the Dirichlet data are imposed only approximately, the bound carries an additional data-oscillation term measuring \(\|u_D-u_{D,h}\|\) on \(\Gamma_D\).
\end{proposition}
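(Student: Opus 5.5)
The proof has two inequalities, and I will treat them separately.

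\emph{First inequality.} Set \(e:=u-u_\theta\in V\); this uses the exact Dirichlet imposition. By \eqref{eq:coercivity_identity}, \(\|e\|_{\mathcal E}^2\le B(e,e)\). Since \(B(u,v)=F(v)\) for all \(v\in V\), we get \(B(e,e)=F(e)-B(u_\theta,e)=\langle\mathfrak R_h(\btheta),e\rangle\). This is at most \(\|\mathfrak R_h(\btheta)\|_{V'}\|e\|_{\mathcal E}\) by the definition of the dual norm. Dividing by \(\|e\|_{\mathcal E}\) gives the first inequality with constant exactly one; the case \(e=0\) is trivial.

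\emph{Second inequality.} Fix \(v\in V\). Galerkin orthogonality gives \(\langle\mathfrak R_h,v\rangle=\langle\mathfrak R_h,v-I_hv\rangle\) for any \(I_hv\in V_h^0(\btheta)\). I will take \(I_h\) to be a spline quasi-interpolant of Scott--Zhang type that preserves the homogeneous Dirichlet condition, for example a tensor-product dual-functional projector built on the knots and pushed forward by \(\bX\). Inserting \(w:=v-I_hv\) into Lemma~\ref{lem:residual_representation} and applying Cauchy--Schwarz element by element and face by face yields
\[
\langle\mathfrak R_h,v\rangle\le\sum_E\rho_E\|\mathcal R_E\|_{L^2(E)}\,\rho_E^{-1}\|w\|_{L^2(E)}+\sum_{I}\Bigl(\tfrac{h_I}{\sigma_I}\Bigr)^{1/2}\|\mathcal J_I\|_{L^2(I)}\Bigl(\tfrac{\sigma_I}{h_I}\Bigr)^{1/2}\|w\|_{L^2(I)}+\text{(Neumann terms)}.
\]
A discrete Cauchy--Schwarz then reduces the claim to
\[
\sum_E\rho_E^{-2}\|w\|_{L^2(E)}^2+\sum_I\frac{\sigma_I}{h_I}\|w\|_{L^2(I)}^2+\sum_B\frac{\sigma_B}{h_B}\|w\|_{L^2(B)}^2\le C\|v\|_{\mathcal E}^2 .
\]

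\emph{Interpolation estimates (main obstacle).} This local stability and approximation bound is where I expect the real work. I would first try the standard spline results on the extended support \(\widetilde E\), the union of supports of the basis functions that are nonzero on \(E\):
\[
\|w\|_{L^2(E)}\le Ch_E\|\nabla v\|_{L^2(\widetilde E)},\qquad \|w\|_{L^2(E)}\le C\|v\|_{L^2(\widetilde E)}.
\]
Combining these with the scaled trace inequality \(\|w\|_{L^2(I)}^2\le C(h_E^{-1}\|w\|_{L^2(E)}^2+h_E\|\nabla w\|_{L^2(E)}^2)\) gives the face terms. The constants depend on \(p\), \(c\), \(\gamma_{\rm sh}\), \(\gamma_{\rm loc}\) and the bi-Lipschitz constants of \(\bX\), but not on \(\btheta\); this is exactly why Assumption~\ref{ass:admissible_mesh}(2)--(4) is needed.

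For the element terms I take the weight in \eqref{eq:rho_weight} case by case.
\begin{itemize}
\item If \(\rho_E=h_E\sigma_E^{-1/2}\), then \(\rho_E^{-2}\|w\|^2\le C\sigma_E h_E^{-2}h_E^2\|\nabla v\|^2_{\widetilde E}\).
\item If \(\rho_E=\mu_E^{-1/2}\), then \(\rho_E^{-2}\|w\|^2\le C\mu_E\|v\|^2_{\widetilde E}\).
\end{itemize}
Both right-hand sides are bounded by the local energy norm on \(\widetilde E\) only if \(\sigma_E\) and \(\mu_E\), which are infima over \(E\), are comparable to the values of \(\sigma\) and \(\mu\) on \(\widetilde E\). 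I would handle this by letting the constant depend on \(\sigma_{\max}/\sigma_{\min}\) and on the coefficient bounds, which the statement permits. The face weight \(\sigma_I/h_I\) with \(\sigma_I=\max\) is handled the same way through the trace inequality on the side attaining the max.

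\emph{Conclusion.} Since each element meets only boundedly many extended supports (a consequence of local quasi-uniformity and fixed \(p\)), summing the local bounds gives \(C\|v\|_{\mathcal E}^2\). Taking the supremum over \(v\) then yields \(\|\mathfrak R_h\|_{V'}\le C_{\rm rel}\eta\).

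For inexact Dirichlet data, I would split \(e=e_0+(u_D-u_{D,h})\)-lifting. Applying the argument above to \(e_0\) and bounding the lifting by its \(H^{1/2}(\Gamma_D)\) norm produces the stated oscillation term.
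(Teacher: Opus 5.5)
Your proof follows the paper's. The first inequality comes from testing the error equation with \(e\) and using the coercivity identity. The second uses a Dirichlet-preserving spline quasi-interpolant, Galerkin orthogonality, Lemma~\ref{lem:residual_representation}, Cauchy--Schwarz, and weighted local interpolation estimates with finite overlap. The difference is that the paper takes the weighted estimates \(\|v-I_hv\|_{L^2(E)}\le C\rho_E\|v\|_{\mathcal E(\omega_E)}\) and their face analogues from the literature. You derive them from the unweighted spline estimates, and one step of that derivation fails as written.

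\emph{The failing step.} In the branch \(\rho_E=\mu_E^{-1/2}\) you use \(L^2\)-stability and then need \(\mu_E\), an infimum over \(E\), to be comparable to \(\mu\) on all of \(\widetilde E\). Assumption~\ref{ass:coercive_regime} only gives \(\mu\ge0\), so there is no \(\mu_{\min}>0\), and no coefficient bound supplies this comparability. For example, take \(\mu>0\) on \(E\), \(\mu=0\) on a neighbouring element of \(\widetilde E\), and \(v\) supported there. Then \(\mu_E\|v\|_{L^2(\widetilde E)}^2>0=\|\mu^{1/2}v\|_{L^2(\widetilde E)}^2\).

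\emph{The fix.} Use the gradient estimate in both branches. Since \(\rho_E^{-2}=\max\{\sigma_E h_E^{-2},\mu_E\}\le h_E^{-2}\bigl(\sigma_{\max}+\|\mu\|_{L^\infty(\Omega)}\operatorname{diam}(\Omega)^2\bigr)\), the bound \(\|w\|_{L^2(E)}\le Ch_E\|\nabla v\|_{L^2(\widetilde E)}\) gives
\(\rho_E^{-2}\|w\|_{L^2(E)}^2\le C\sigma_{\min}^{-1}\bigl(\sigma_{\max}+\|\mu\|_{L^\infty(\Omega)}\operatorname{diam}(\Omega)^2\bigr)\|\sigma^{1/2}\nabla v\|_{L^2(\widetilde E)}^2\).
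This constant is not robust in the reaction, but the statement permits it. A reaction-robust constant would need local comparability of \(\mu\), as in the Verf\"urth setting the paper cites.

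\emph{A minor correction.} Finite overlap of the extended supports follows from the tensor-product structure and the degree alone. Local quasi-uniformity is instead what lets you replace the extended-support size by \(h_E\) in the approximation estimates and compare \(h_E\) with \(h_I\) in the face terms.
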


\begin{proof}
Let \(e:=u-u_\theta\). Since the Dirichlet data are imposed exactly, \(e\in V\).
Using the continuous problem~\eqref{eq:weak_form} and the
definition~\eqref{eq:residual_functional} of the residual,
\[
B(e,v)=B(u,v)-B(u_\theta,v)=F(v)-B(u_\theta,v)
=\langle\mathfrak R_h(\btheta),v\rangle
\qquad \forall v\in V .
\]
Taking \(v=e\) and using the coercivity identity~\eqref{eq:coercivity_identity},
\[
\|e\|_{\mathcal E}^2
\le B(e,e)
=\langle\mathfrak R_h(\btheta),e\rangle
\le \|\mathfrak R_h(\btheta)\|_{V'}\,\|e\|_{\mathcal E},
\]
hence \(\|u-u_\theta\|_{\mathcal E}\le\|\mathfrak R_h(\btheta)\|_{V'}\), which is the
first inequality in~\eqref{eq:global_reliability}.

It remains to bound the dual norm of the residual by the computable estimator. Let
\(v\in V\) be arbitrary and let \(I_hv\in V_h^0(\btheta)\) be a Cl\'ement- or
Scott--Zhang-type quasi-interpolant adapted to spline spaces and preserving the
homogeneous Dirichlet condition
\cite{buffa2022mathematical,clement1975approximation,scott1990finite}. On
admissible meshes---using, in particular, the local quasi-uniformity of item~4
of Assumption~\ref{ass:admissible_mesh}---it satisfies, for every element \(E\) and every face \(F\),
\[
\|v-I_hv\|_{L^2(E)}\le C\rho_E\,\|v\|_{\mathcal E(\omega_E)},
\qquad
\|v-I_hv\|_{L^2(F)}\le C\Bigl(\frac{h_F}{\sigma_F}\Bigr)^{1/2}\|v\|_{\mathcal E(\omega_F)},
\]
where the patches \(\omega_E\), \(\omega_F\) have finite overlap; the unweighted form of these estimates is classical for spline spaces \cite{buffa2022mathematical}, and the coefficient- and reaction-weighted form follows by combining them patchwise with the scaling arguments of \cite{verfurth2005reactiondiffusion,verfurth2005convectiondiffusion}. By Galerkin
orthogonality, \(\langle\mathfrak R_h(\btheta),I_hv\rangle=0\), and hence
\(\langle\mathfrak R_h(\btheta),v\rangle=\langle\mathfrak R_h(\btheta),w\rangle\)
with \(w:=v-I_hv\). The residual representation of
Lemma~\ref{lem:residual_representation} then gives
\[
\langle\mathfrak R_h(\btheta),v\rangle
=\sum_{E\in\mathcal T_h(\btheta)}\int_E\mathcal R_E[u_\theta]\,w\,\dx
+\sum_{I\in\mathcal B_{\rm int}^\star(\btheta)}\int_I\mathcal J_I[u_\theta]\,w\,\mathrm{d}S
+\sum_{B\in\mathcal B_N(\btheta)}\int_B\mathcal N_B[u_\theta]\,w\,\mathrm{d}S .
\]
We estimate the three sums separately. For the element residuals, the
Cauchy--Schwarz inequality, the first interpolation estimate, and the finite
overlap of the patches \(\omega_E\) give
\begin{eqnarray*}
\Bigl|\sum_{E\in\mathcal T_h(\btheta)}\int_E\mathcal R_E[u_\theta]\,w\,\dx\Bigr|
&\le&
\Bigl(\sum_{E}\rho_E^2\|\mathcal R_E[u_\theta]\|_{L^2(E)}^2\Bigr)^{1/2}
\Bigl(\sum_{E}\rho_E^{-2}\|w\|_{L^2(E)}^2\Bigr)^{1/2} \\
&\le&
C\Bigl(\sum_{E}\rho_E^2\|\mathcal R_E[u_\theta]\|_{L^2(E)}^2\Bigr)^{1/2}\|v\|_{\mathcal E}.
\end{eqnarray*}
The trace interpolation estimate with \(F=I\) yields, in the same way,
\[
\Bigl|\sum_{I\in\mathcal B_{\rm int}^\star(\btheta)}\int_I\mathcal J_I[u_\theta]\,w\,\mathrm{d}S\Bigr|
\le
C\Bigl(\sum_{I\in\mathcal B_{\rm int}^\star(\btheta)}
\frac{h_I}{\sigma_I}\|\mathcal J_I[u_\theta]\|_{L^2(I)}^2\Bigr)^{1/2}\|v\|_{\mathcal E},
\]
and the same argument on Neumann faces, with \(F=B\), gives
\[
\Bigl|\sum_{B\in\mathcal B_N(\btheta)}\int_B\mathcal N_B[u_\theta]\,w\,\mathrm{d}S\Bigr|
\le
C\Bigl(\sum_{B\in\mathcal B_N(\btheta)}
\frac{h_B}{\sigma_B}\|\mathcal N_B[u_\theta]\|_{L^2(B)}^2\Bigr)^{1/2}\|v\|_{\mathcal E}.
\]
Combining the three bounds and recalling the definition~\eqref{eq:robust_estimator}
of \(\eta(\btheta)\), we obtain
\[
|\langle\mathfrak R_h(\btheta),v\rangle|
\le C_{\rm rel}\,\eta(\btheta)\,\|v\|_{\mathcal E}.
\]
The constant \(C_{\rm rel}\) depends only on the spline degree and continuity, the
uniform shape-regularity and geometry constants, and the coefficient bounds; in
particular, it is independent of \(\btheta\in\Theta_{\rm ad}\). Taking the supremum
over \(v\in V\setminus\{0\}\) yields
\(\|\mathfrak R_h(\btheta)\|_{V'}\le C_{\rm rel}\,\eta(\btheta)\), the second
inequality in~\eqref{eq:global_reliability}.
\end{proof}
\begin{remark}[Anisotropic meshes]
\label{rem:anisotropy}
The minimum-size constraint prevents element collapse but does not control aspect ratios in tensor-product meshes. Therefore Assumption~\ref{ass:admissible_mesh} is a condition for the theory, not an automatic consequence of the parametrization. In practice, boundary layers may produce anisotropic elements. A fully anisotropic estimator would require directional weights and corresponding trace estimates \cite{formaggia2001new,kunert2001robust,apel2011anisotropic}.
\end{remark}

\begin{theorem}[Local efficiency]
\label{thm:local_efficiency}
Under Assumptions~\ref{ass:coercive_regime} and~\ref{ass:admissible_mesh}, the local contribution $\eta_E(\btheta)$ of~\eqref{eq:robust_estimator} satisfies
\begin{equation}
\label{eq:local_efficiency}
\eta_E(\btheta)
\le C_{\rm eff}
\left(
\|u-u_\theta\|_{{\mathcal E},\omega_E}
+\operatorname{osc}_{\omega_E}(\btheta)
\right),
\end{equation}
where $\omega_E$ is the patch of elements sharing a vertex with $E$. The constant is independent of $\btheta$ and of the local mesh sizes; it depends on the polynomial degree and continuity, the shape-regularity and geometry constants, and the coefficient bounds---through the convective term, in particular, on $\|\bs\beta\|_{L^\infty}\operatorname{diam}(\Omega)/\sigma_{\min}$, the mechanism behind the convection-dominated deterioration of Remark~\ref{rem:coefficient_regimes}. The term $\operatorname{osc}_{\omega_E}$ contains the usual data, coefficient, and residual-approximation oscillations.
\end{theorem}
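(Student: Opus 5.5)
The plan is the classical bubble-function technique of Verfürth, applied term by term to the local contributions
\[
\eta_E^2=\rho_E^2\|\mathcal R_E[u_\theta]\|_{L^2(E)}^2+\sum_{I\subset\partial E}\tfrac{h_I}{\sigma_I}\|\mathcal J_I[u_\theta]\|_{L^2(I)}^2+\sum_{B\subset\partial E}\tfrac{h_B}{\sigma_B}\|\mathcal N_B[u_\theta]\|_{L^2(B)}^2 .
\]
The test functions are localized bubbles, so Galerkin orthogonality is not used. Instead, the error equation $B(e,v)=\langle\mathfrak R_h(\btheta),v\rangle$ is combined with the residual representation of Lemma~\ref{lem:residual_representation}.

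\textbf{Step 1: element residuals.} Replace $\mathcal R_E$ by a polynomial (or pulled-back polynomial) approximation $\overline{\mathcal R}_E$. The difference is absorbed into $\operatorname{osc}_{\omega_E}$. This difference contains the data oscillation $f-\bar f$ and the coefficient oscillations of $\sigma,\bs\beta,\alpha$ applied to $u_\theta$. Let $b_E$ be the interior bubble on $E$, defined via the bi-Lipschitz geometry map. Test with $v=b_E\overline{\mathcal R}_E$, which vanishes on $\partial E$. Lemma~\ref{lem:residual_representation} then gives $\int_E\mathcal R_E v=B(e,v)$. Norm equivalence on finite-dimensional spaces yields $\|\overline{\mathcal R}_E\|^2\lesssim\int_E b_E\overline{\mathcal R}_E^2$. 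Continuity of $B$ yields
\[
|B(e,v)|\le\bigl(\sigma_{\max}^{1/2}\|\nabla v\|+\|\bs\beta\|_\infty\|v\|\,\|\nabla e\|/\|\sigma^{1/2}\nabla e\|\cdots\bigr),
\]
which in practice is bounded by $\|e\|_{\mathcal E,E}\bigl(\sigma_E^{1/2}\|\nabla v\|+\mu_E^{1/2}\|v\|\bigr)$ plus a convective term. The inverse estimate $\|\nabla v\|\lesssim h_E^{-1}\|v\|$ gives $\sigma_E^{1/2}\|\nabla v\|+\mu_E^{1/2}\|v\|\lesssim\rho_E^{-1}\|v\|$, since $\rho_E^{-1}\simeq\max(\sigma_E^{1/2}/h_E,\mu_E^{1/2})$. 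This gives $\rho_E\|\mathcal R_E\|\lesssim\|e\|_{\mathcal E,E}+\mathrm{osc}$.

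\textbf{Step 2: jumps and Neumann residuals.} Extend $\bar{\mathcal J}_I$ from $I$ into $\omega_I=E^+\cup E^-$ with a face bubble $b_I$. Choose the stretching of the extension so that $\|v\|_{L^2(\omega_I)}\lesssim\epsilon^{1/2}\|\bar{\mathcal J}_I\|_{L^2(I)}$ and $\|\nabla v\|\lesssim\epsilon^{-1/2}\|\bar{\mathcal J}_I\|$, with $\epsilon=\min(h_I,\sigma_I^{1/2}\rho)$ following Verfürth's reaction-dominated squeezing. Lemma~\ref{lem:residual_representation} gives $\int_I\mathcal J_I v=B(e,v)-\sum_{E'\subset\omega_I}\int_{E'}\mathcal R_{E'}v$. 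The element term is controlled by Step 1, and the bubble scalings then give $(h_I/\sigma_I)^{1/2}\|\mathcal J_I\|\lesssim\|e\|_{\mathcal E,\omega_I}+\mathrm{osc}$. Neumann faces are treated identically with a one-sided bubble; the oscillation $g-\bar g$ enters here. Summing over the faces of $E$ gives the bound on $\omega_E$.

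\textbf{Main obstacle.} The main difficulty is twofold: the coefficient-robust weights, and the convective term $(\bs\beta\cdot\nabla e,v)$, which is not controlled by the energy norm on small patches. I would bound it by $\|\bs\beta\|_\infty\sigma_E^{-1/2}\|\sigma^{1/2}\nabla e\|\,\|v\|$. Comparing with $\rho_E^{-1}\|v\|$ then produces the factor $\|\bs\beta\|_\infty\rho_E\sigma_E^{-1/2}\le\|\bs\beta\|_\infty h_E/\sigma_E\le\|\bs\beta\|_\infty\operatorname{diam}(\Omega)/\sigma_{\min}$, exactly the dependence announced in the statement. A second subtlety is specific to spline spaces. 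Interior jumps vanish on $C^{p-1}$ coefficient-homogeneous knot lines, so only faces in $\mathcal B^\star_{\rm int}$ matter. There, $\sigma_I=\max$ forces the use of the side with the larger diffusion, and I would take the bubble supported only in that element to keep the weight consistent. Local quasi-uniformity from Assumption~\ref{ass:admissible_mesh} keeps $h_I$ comparable to $h_{E^\pm}$, so all constants are independent of $\btheta$.
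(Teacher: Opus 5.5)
Your main line is the paper's proof. You use an element bubble tested against a projected element residual, then two-element face bubbles and boundary-face bubbles. Lemma~\ref{lem:residual_representation} and the error identity $B(e,v)=\langle\mathfrak R_h(\btheta),v\rangle$ replace Galerkin orthogonality, and the projection defects go into $\operatorname{osc}_{\omega_E}(\btheta)$. Your explicit bound on the convective term produces $\|\bs\beta\|_{L^\infty}\rho_E\sigma_E^{-1/2}\le\|\bs\beta\|_{L^\infty}\operatorname{diam}(\Omega)/\sigma_{\min}$, which makes precise a dependence the paper only states.

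\textbf{The one-sided face bubble breaks the argument.} You propose, for $I\in\mathcal B^\star_{\rm int}(\btheta)$, a bubble supported only in the element with the larger diffusion. Such a $v$ has trace $b_I\Pi_I\mathcal J_I[u_\theta]\neq0$ on $I$ from that side and zero from the other. Its zero extension therefore jumps across $I$ and is not in $H^1(\Omega)$, hence not in $V$. Neither the error identity nor Lemma~\ref{lem:residual_representation} applies to it. Integrating by parts on a single element only exposes the one-sided error flux $\sigma^+\nabla e^+\cdot\bn^+$, whereas $\mathcal J_I[u_\theta]=\sigma^+\nabla e^+\cdot\bn^++\sigma^-\nabla e^-\cdot\bn^-$ involves both sides.

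The modification is also unnecessary. Take the standard bubble on $\omega_I=E^+\cup E^-$, with $\sup_{E^\pm}\sigma\lesssim\sigma_{E^\pm}$. The diffusive contribution from each side is then $\lesssim\|e\|_{\mathcal E,E^\pm}\,\sigma_{E^\pm}^{1/2}h_I^{-1/2}\|\Pi_I\mathcal J_I[u_\theta]\|_{L^2(I)}$. In the diffusion-dominated case the element-residual contribution has the same form, by your Step~1 and local quasi-uniformity. Since $\sigma_{E^\pm}\le\sigma_I=\max(\sigma_{E^+},\sigma_{E^-})$, you get $(h_I/\sigma_I)^{1/2}\|\Pi_I\mathcal J_I[u_\theta]\|_{L^2(I)}\lesssim\|e\|_{\mathcal E,\omega_I}+\operatorname{osc}_{\omega_I}(\btheta)$ directly. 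The $\max$ is exactly what the symmetric argument yields. Contrast sensitivity belongs to reliability (quasi-monotonicity), not to this step.

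\textbf{Two further points} are shared with the paper's sketch but should be made explicit.
\begin{itemize}
\item The face weight in~\eqref{eq:robust_estimator} is $h_I/\sigma_I$, not Verf\"urth's reaction-robust weight $\sigma_I^{-1/2}\min\{h_I\sigma_I^{-1/2},\mu^{-1/2}\}$. With your squeezing width $\epsilon=\sigma_I^{1/2}\rho<h_I$, the bubble scalings give $(h_I/\sigma_I)^{1/2}\|\mathcal J_I[u_\theta]\|_{L^2(I)}\lesssim(h_I/\epsilon)^{1/2}\bigl(\|e\|_{\mathcal E,\omega_I}+\operatorname{osc}_{\omega_I}(\btheta)\bigr)$. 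Here $h_I/\epsilon\le\operatorname{diam}(\Omega)(\sup_\Omega\mu/\sigma_{\min})^{1/2}$ is merely a coefficient bound. That is admissible for the theorem as stated, but the squeezing does not make this term robust, so say so rather than imply it.
\item Bounding the zeroth-order part of $B(e,v)$ by $\|e\|_{\mathcal E,E}\,\mu_E^{1/2}\|v\|_{L^2(E)}$ tacitly assumes $|\alpha|\lesssim\mu$ on $E$. Examples are $\nabla\cdot\bs\beta=0$, or a condition $\|\alpha\|_{L^\infty}\le c\operatorname*{ess\,inf}\mu$ as in robust convection--diffusion analyses. It also needs $\sup_E\mu$ rather than $\mu_E$. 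If $\mu=0\neq\alpha$ on $E$, an error that is constant on $E$ has vanishing local energy norm while $(\alpha e,v)_E\neq0$. Without such a hypothesis, the continuity bound you invoke fails.
\end{itemize}
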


\begin{proof}
We consider a standard local bubble-function argument~\cite{verfurth2013posteriori,ainsworth2000posteriori}, adapted to the residual
components in~\eqref{eq:robust_estimator}. Let \(e:=u-u_\theta\). We first make
the local contribution of the estimator explicit. Let \(\mathcal F_E^\star\)
denote the set of interior faces in \(\mathcal B_{\rm int}^\star(\btheta)\)
contained in \(\partial E\), and \(\mathcal B_E^N\) the set of Neumann faces
contained in \(\partial E\). Up to harmless sharing factors on interior faces,
we may take
\[
\eta_E^2(\btheta)
:=
\rho_E^2\|\mathcal R_E[u_\theta]\|_{L^2(E)}^2
+\sum_{I\in\mathcal F_E^\star}
\frac{h_I}{\sigma_I}\|\mathcal J_I[u_\theta]\|_{L^2(I)}^2
+\sum_{B\in\mathcal B_E^N}
\frac{h_B}{\sigma_B}\|\mathcal N_B[u_\theta]\|_{L^2(B)}^2 .
\]
Let \(\Pi_E\), \(\Pi_I\), and \(\Pi_B\) be local polynomial projections on
elements, interior faces, and Neumann faces, respectively; the differences
between the residuals and their projections are collected in the oscillation
term \(\operatorname{osc}_{\omega_E}(\btheta)\), which contains the data,
coefficient, and residual-approximation oscillations on the patch \(\omega_E\).

We first estimate the element residual. Let \(b_E\) be the standard element
bubble on \(E\) and set \(v_E:=b_E\,\Pi_E\mathcal R_E[u_\theta]\), extended by
zero outside \(E\). Since \(v_E\in V\) vanishes on \(\partial E\),
Lemma~\ref{lem:residual_representation} gives
\[
\int_E \mathcal R_E[u_\theta]\,v_E\,\dx
=\langle \mathfrak R_h(\btheta),v_E\rangle .
\]
Using \(B(e,v_E)=\langle\mathfrak R_h(\btheta),v_E\rangle\), the boundedness of
\(B\), and the inverse and scaling estimates for element bubbles~\cite{verfurth2013posteriori} (in the reaction-weighted case, the modified cut-off bubbles of~\cite{verfurth2005reactiondiffusion}, which yield the weight \(\rho_E\) robustly), we obtain
\[
\rho_E\,\|\Pi_E\mathcal R_E[u_\theta]\|_{L^2(E)}
\le C\,\|e\|_{\mathcal E,E}.
\]
Adding and subtracting \(\Pi_E\mathcal R_E[u_\theta]\) yields
\[
\rho_E\,\|\mathcal R_E[u_\theta]\|_{L^2(E)}
\le C\bigl(
\|e\|_{\mathcal E,E}
+\rho_E\,\|\mathcal R_E[u_\theta]-\Pi_E\mathcal R_E[u_\theta]\|_{L^2(E)}
\bigr),
\]
and the second term is part of the volume oscillation.

We now consider an interior face
\(I=\partial E^+\cap\partial E^-\in\mathcal B_{\rm int}^\star(\btheta)\). Let
\(b_I\) be a face bubble supported on the two-element patch
\(\omega_I:=E^+\cup E^-\), and let \(v_I\) be a lifting of
\(b_I\,\Pi_I\mathcal J_I[u_\theta]\) from \(I\) to \(\omega_I\)---the polynomial face extension composed with the face bubble~\cite{verfurth2013posteriori}---extended by zero outside \(\omega_I\). Applying Lemma~\ref{lem:residual_representation}
with \(v_I\) and using \(B(e,v_I)=\langle\mathfrak R_h(\btheta),v_I\rangle\),
\[
\int_I \mathcal J_I[u_\theta]\,v_I\,\mathrm{d}S
=B(e,v_I)
-\sum_{E'\subset\omega_I}\int_{E'}\mathcal R_{E'}[u_\theta]\,v_I\,\dx .
\]
The trace and inverse estimates for face bubbles, together with the
element-residual bound already obtained, give
\[
\Bigl(\frac{h_I}{\sigma_I}\Bigr)^{1/2}\|\mathcal J_I[u_\theta]\|_{L^2(I)}
\le C\bigl(\|e\|_{\mathcal E,\omega_I}+\operatorname{osc}_{\omega_I}(\btheta)\bigr).
\]
The Neumann term is estimated in the same way: for
\(B\subset\partial E\cap\Gamma_N\), a boundary-face bubble \(b_B\) and a lifting
\(v_B\) of \(b_B\,\Pi_B\mathcal N_B[u_\theta]\), supported on the element patch
adjacent to \(B\), yield, by Lemma~\ref{lem:residual_representation} and the
corresponding trace and inverse estimates,
\[
\Bigl(\frac{h_B}{\sigma_B}\Bigr)^{1/2}\|\mathcal N_B[u_\theta]\|_{L^2(B)}
\le C\bigl(\|e\|_{\mathcal E,\omega_B}+\operatorname{osc}_{\omega_B}(\btheta)\bigr).
\]
Combining the bounds for the element residual, the interior flux jumps, and the
Neumann residuals, and using the finite overlap of the element and face patches,
gives
\[
\eta_E(\btheta)
\le C_{\rm eff}\bigl(
\|u-u_\theta\|_{\mathcal E,\omega_E}
+\operatorname{osc}_{\omega_E}(\btheta)
\bigr).
\]
The constant \(C_{\rm eff}\) depends only on the polynomial degree and
continuity, the coefficient bounds, and the uniform shape-regularity and
geometry constants of the admissible mesh family; in particular, it is
independent of \(\btheta\) and of the local mesh sizes. This
proves~\eqref{eq:local_efficiency}.
\end{proof}

\begin{remark}[Coefficient regimes]
\label{rem:coefficient_regimes}
For the Poisson case \(\sigma\equiv1\), \(\bs\beta\equiv0\), and
\(\alpha\equiv0\), the energy norm is the \(H^1\)-seminorm and
Proposition~\ref{prop:dual_reliability} and Theorem~\ref{thm:local_efficiency}
give the standard two-sided residual bounds, up to oscillation. For
advection--diffusion--reaction problems satisfying
Assumption~\ref{ass:coercive_regime}, the same residual estimator is used with
the corresponding energy norm; in convection-dominated regimes, however, the
constants may deteriorate with the perturbation parameter
\cite{petzoldt2002aposteriori,verfurth2005convectiondiffusion,sangalli2008robust,apel2011anisotropic,verfurth2005reactiondiffusion}.
A separate situation arises in the indefinite regime
\(\mu=\alpha-\tfrac12\nabla\cdot\bs\beta<0\), of which the Helmholtz operator
\(-\Delta u-c^{2}u\) is the canonical example. There,
Assumption~\ref{ass:coercive_regime} is violated, the bilinear form is no longer
coercive on \(V\), and the reliability constant in
Proposition~\ref{prop:dual_reliability} cannot be guaranteed: for wavenumbers
close to a discrete resonance, the inf--sup constant degenerates and the residual
no longer bounds the energy error in the sense of~\eqref{eq:global_reliability}
\cite{ihlenburg1995finite,ihlenburg1998finite,babuska1997pollution}. Standard
a posteriori control in this regime requires either a sufficiently resolved mesh
(the asymptotic, pollution-free range \(kh\lesssim 1\)) or wavenumber-explicit
duality arguments \cite{doerfler2013helmholtz}.
\end{remark}

\begin{remark}[Scope of the theory in the numerical experiments]
\label{rem:experiment_scope}
Several experiments of Section~\ref{sec:numerical_examples} fall outside the
assumptions of Proposition~\ref{prop:dual_reliability} and
Theorem~\ref{thm:local_efficiency}, and are interpreted accordingly. First, the
convection-dominated boundary layer of
Section~\ref{subsec:exp5_advection_diffusion} is read as a fixed-DOF mesh-quality
test, not as a robustness result uniform in the diffusion parameter
(cf.~Remark~\ref{rem:coefficient_regimes}). Second, for the indefinite Helmholtz
problem of Section~\ref{subsec:exp_helmholtz}, we claim no reliability guarantee:
the estimator~\eqref{eq:robust_estimator} is used purely as a differentiable
mesh-quality functional, the reported effectivity index is a numerical diagnostic
rather than the realization of a proven bound, and the experiment probes whether
the residual-driven predictor remains useful when the governing operator leaves the
coercive setting. Third, for a piecewise-constant \(\sigma\) with large contrast
\(\sigma_{\max}/\sigma_{\min}\) (Section~\ref{subsec:exp4_lshape}), the
contrast-robustness of the constants requires a quasi-monotonicity condition on
the coefficient distribution around each vertex \cite{petzoldt2002aposteriori};
this is implicit in the interface weights
\(\sigma_I=\max(\sigma_{E^+},\sigma_{E^-})\), and, in its absence, the constants carry
an explicit \(\sigma_{\max}/\sigma_{\min}\) dependence. Finally, the a posteriori
theory presumes a conforming discretization \(V_h\subset V\) with exact Dirichlet
data; the immersed, fictitious-domain construction of
Section~\ref{subsec:exp4_lshape} enforces the re-entrant boundary only
approximately through the \(C^0\) cut, so
Proposition~\ref{prop:dual_reliability} and Theorem~\ref{thm:local_efficiency}
do not rigorously cover that experiment, and the estimator is again used as a
mesh-quality functional rather than as a strict bound.
\end{remark}

\subsection{Gradient computation via the discrete adjoint}
\label{subsec:diff_solve}

The reduced loss is
\[
\widehat{\mathcal L}(\btheta)
:=\mathcal L(\mathbf U(\btheta),\btheta),
\]
where \(\mathbf U(\btheta)\) solves~\eqref{eq:linear_system}. The dependence on \(\btheta\) is explicit through the basis functions, quadrature points, element sizes, and residual weights, and implicit through the solution vector. Differentiating through the internal operations of a direct or iterative linear solver is unnecessary; rather we differentiate the discrete solution equation. By the chain rule,
\begin{equation}
\label{eq:chain_rule}
\nabla_{\btheta}\widehat{\mathcal L}
=
\left(\frac{\partial\mathbf U}{\partial\btheta}\right)^T
\nabla_{\mathbf U}\mathcal L
+
\frac{\partial\mathcal L}{\partial\btheta},
\end{equation}
and differentiating \(\mathbf K(\btheta)\mathbf U(\btheta)=\mathbf F(\btheta)\) yields the solution sensitivity equation
\begin{equation}
\label{eq:solution_sensitivity}
\mathbf K(\btheta)\,\frac{\partial\mathbf U}{\partial\btheta}
=
\frac{\partial\mathbf F}{\partial\btheta}
-
\frac{\partial\mathbf K}{\partial\btheta}\,\mathbf U(\btheta).
\end{equation}
The dense sensitivity \(\partial\mathbf U/\partial\btheta\) is avoided by an implicit discrete adjoint: defining \(\bs\lambda(\btheta)\) as the solution of
\begin{equation}
\label{eq:adjoint_system}
\mathbf K(\btheta)^T\bs\lambda(\btheta)
=\nabla_{\mathbf U}\mathcal L(\mathbf U(\btheta),\btheta),
\end{equation}
combining~\eqref{eq:chain_rule}--\eqref{eq:adjoint_system} gives the reduced gradient
\begin{equation}
\label{eq:adjoint_grad_prop}
\nabla_{\btheta}\widehat{\mathcal L}(\btheta)
=
\bs\lambda(\btheta)^T
\left(
\frac{\partial\mathbf F}{\partial\btheta}(\btheta)
-
\frac{\partial\mathbf K}{\partial\btheta}(\btheta)\,\mathbf U(\btheta)
\right)
+
\frac{\partial\mathcal L}{\partial\btheta}(\mathbf U(\btheta),\btheta).
\end{equation}
This is the discrete counterpart of adjoint calculus for PDE-constrained optimization~\cite{lions1971optimal,hinze2008optimization,blondel2021efficient}. It requires one additional solve with \(\mathbf K(\btheta)^T\); when a factorization of the forward matrix is available, it can be reused, so the marginal cost is small. The dense sensitivity matrix is never formed, and the memory cost is independent of the internal depth of the linear solver.

\paragraph{Realization through reverse-mode AD.}
Modern automatic-differentiation (AD) frameworks~\cite{baydin2018ad,jax2018github} realize the implicit adjoint~\eqref{eq:adjoint_system}--\eqref{eq:adjoint_grad_prop} natively. When the linear solve \(\mathbf U=\mathbf K^{-1}\mathbf F\) is invoked through a differentiable primitive, reverse-mode AD inserts the adjoint solve automatically. This is the implicit-function-theorem pullback of the solve: the adjoint of the solution is \(\bar{\mathbf F}=\mathbf K^{-\top}\bar{\mathbf U}\), which is exactly the multiplier \(\bs\lambda\) of~\eqref{eq:adjoint_system}, and the sensitivity with respect to the matrix is the outer product \(\bar{\mathbf K}=-\bar{\mathbf F}\,\mathbf U^\top\). Crucially, this is not differentiation through the solver iterations, which would give an incorrect gradient before convergence; an iterative solver must instead expose the implicit adjoint explicitly, for example through a custom linear-solve pullback. The remaining derivatives of \(\mathbf K\), \(\mathbf F\), and \(\mathcal L\) reduce to vector--Jacobian products on element-level assembly that AD evaluates efficiently. In our JAX implementation, the linear solve exposes this implicit-function-theorem pullback either natively (\textit{jnp.linalg.solve}) or through an explicit VJP registered on the solve that reuses the forward factorization---the two are verified to coincide to finite-difference accuracy---so gradients are obtained by direct \textit{jax.value\_and\_grad} on the reduced loss, without deriving an adjoint PDE by hand and without differentiating through any iterative loop. Algorithm~\ref{alg:radapt-iga} summarizes the resulting per-instance optimization for a single realization of the PDE data. Its continuation ladder uses a level-transfer operator \(\mathcal P_{\ell-1}^{\ell}\), which initializes level \(\ell\) by resampling the optimized mesh of level \(\ell-1\) on the finer \(N_\ell\)-element partition, preserving the learned grading. This optimized mesh is the target that the parametric method of Section~\ref{sec:parametric_inn} learns to predict: rather than re-solving the optimization for every parameter value, a single network is trained to reproduce the optimized grading across the whole family in one forward pass. Accordingly, Algorithm~\ref{alg:radapt-iga} is not evaluated on its own; all reported experiments use the parametric Algorithm~\ref{alg:parametric-radapt}.

\begin{algorithm}[htbp]
\caption{Residual-driven $r$-adaptive IGA}
\label{alg:radapt-iga}
\begin{algorithmic}[1]
\Require Degree $p$, continuity $c$; increasing element counts
$N_0<N_1<\cdots<N_L$ (per direction) for the coarse-to-fine levels,
one knot vector per level; minimum element
size $h_{\min}$; PDE data and quadrature rules; optimizer settings
$(K_{\max},\{\gamma_k\})$; stopping tolerances $(\tau_{\rm grad},\tau_{\rm rel})$.
\State Initialize $\bs\theta_0^0$ as the uniform mesh with $N_0$ elements.
\For{$\ell=0,1,\ldots,L$}
    \If{$\ell>0$}
        \State $\bs\theta_\ell^0\gets\mathcal P_{\ell-1}^{\ell}(\bs\theta_{\ell-1}^\ast)$
        \Comment{warm start: resample the optimized mesh map of level $\ell-1$ on $N_\ell$ elements}
    \EndIf
    \State $J_\ell^\ast\gets+\infty$
    \For{$k=0,1,\ldots,K_{\max}-1$}
        \State Build $V_h(\bs\theta_\ell^k)$ and solve $\mathbf K(\bs\theta_\ell^k)\mathbf U_\ell^k=\mathbf F(\bs\theta_\ell^k)$.
        \State Evaluate $\mathcal L_\ell^k=\mathcal L(\mathbf U_\ell^k,\bs\theta_\ell^k)$.
        \State Compute $\mathbf g_\ell^k=\nabla_{\bs\theta}\widehat{\mathcal L}_\ell(\bs\theta_\ell^k)$ by reverse-mode AD, i.e., \eqref{eq:adjoint_grad_prop}.
        \State Update $\bs\theta_\ell^{k+1}\gets\operatorname{Adam}(\bs\theta_\ell^k,\mathbf g_\ell^k;\gamma_k)$.
        \If{$\mathcal L_\ell^k<J_\ell^\ast$}
            \State $(J_\ell^\ast,\bs\theta_\ell^\ast,\mathbf U_\ell^\ast)\gets(\mathcal L_\ell^k,\bs\theta_\ell^k,\mathbf U_\ell^k)$
        \EndIf
        \If{$\|\mathbf g_\ell^k\|\le\tau_{\rm grad}$ \textbf{or} $\bigl(k\ge1$ \textbf{and} $|\mathcal L_\ell^k-\mathcal L_\ell^{k-1}|\le\tau_{\rm rel}\,|\mathcal L_\ell^{k-1}|\bigr)$}
            \State \textbf{break}
        \EndIf
    \EndFor
    \State Export the optimized mesh and solution $(\bs\theta_\ell^\ast,\mathbf U_\ell^\ast)$.
\EndFor
\end{algorithmic}
\end{algorithm}

\section{Parametric residual-informed neural mesh}
\label{sec:parametric_inn}

We now turn to families of problems. The coefficients and data
of~\eqref{eq:general_pde} depend on a parameter \(\nu\in\mathcal P\subset\mathbb R^{d_\nu}\), with \(\mathcal P\) compact; the geometry and mesh topology stay fixed. For an admissible mesh \(\btheta\), let \(u_{h,\btheta,\nu}\in V_h(\btheta)\) be the Galerkin solution for parameter \(\nu\), and \(\eta(\btheta;\nu)\) its residual estimator~\eqref{eq:robust_estimator}. The central idea is the following: a neural network predicts a mesh, not a solution. We call this construction the residual-informed neural mesh. Given \(\nu\), it returns an admissible knot configuration; the solution is given by a standard Galerkin solve. The network never approximates \(u_{h,\btheta,\nu}\). This is what separates the method from neural PDE solvers with moving meshes~\cite{omella2024radapt}, where the solution itself is the neural ansatz, and from Ritz-based parametric $r$-adaptivity~\cite{aballay2025radaptivefiniteelementmethod}, which is limited to symmetric coercive problems. Residual losses have been used to train neural surrogates of the solution~\cite{taylor2023deepfourier,uriarte2023doubleritz,rojas2024rvpinn}; here, we move that role onto the mesh and keep the solution a Galerkin solution. In the coercive case, the loss inherits the reliability of Section~\ref{sec:proposed_approach}. Figure~\ref{fig:parametric_inn_flowchart} summarizes the offline and online stages of the residual-informed neural mesh.

\begin{figure}[t]
\centering
\includegraphics[width=\linewidth]{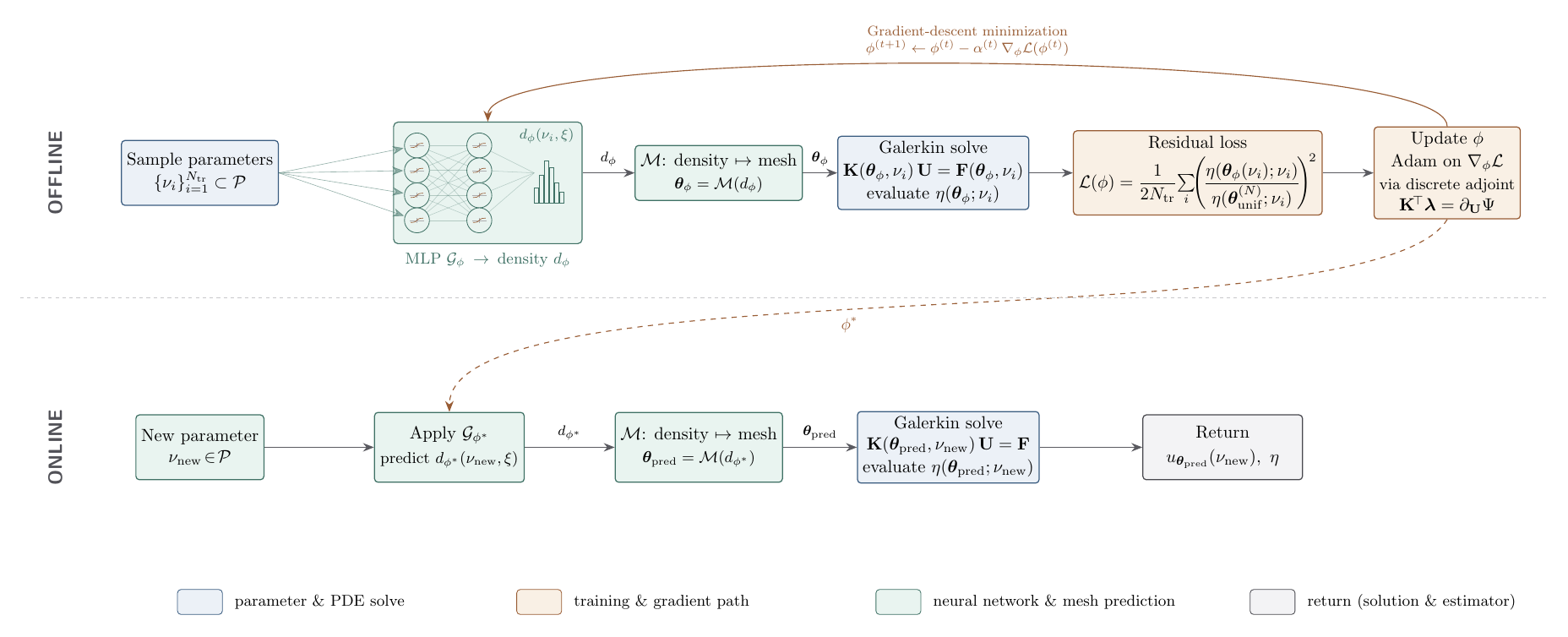}
\caption{Residual-informed neural mesh flowchart. \textbf{Offline}: the network weights \(\phi\) are trained by reverse-mode AD gradients on the residual loss, with one Galerkin solve per training sample. \textbf{Online}: the trained network predicts a per-element mesh density \(d_\phi\) in a single forward pass, the reconstruction \(\mathcal M\) turns it into an admissible mesh \(\btheta_\phi\), and one Galerkin solve produces the solution and the residual estimator.}
\label{fig:parametric_inn_flowchart}
\end{figure}

\subsection{Positional density network and mesh predictor}
\label{subsec:inn_mesh_predictor}
A direct map \(\mathcal P\to\mathbb R^m\) from the parameter to the \(m\) element sizes would be tied to a fixed number of elements: changing the refinement level would change the output dimension and require retraining. We avoid this by predicting a \emph{mesh density} instead of a fixed-size vector: the network returns a scalar density at any single coordinate---high where elements should be small, low where they should be large---which we sample at the element centers and convert into sizes. Because the network reads one coordinate at a time, its architecture is independent of \(m\), so the same trained weights produce a mesh at any refinement level and enable coarse-to-fine continuation (Section~\ref{subsec:inn_offline_protocols}).

The construction is guided by four requirements on the predicted sizes: they must be (i) positive, (ii) sum to the domain length so the mesh tiles it, (iii) bounded away from zero so no element collapses onto its neighbor, and (iv) independent of the element count. A raw network cannot enforce these, so we pass its scalar output through a fixed, parameter-free map in three differentiable steps. We first center the densities, since only their relative values matter and adding a constant should leave the mesh unchanged. We then saturate them into a bounded range through a \(\tanh\): without this bound the following step could make one element exponentially smaller than the rest, and the bound \(T\) sets how aggressively the mesh may be graded. Finally, a softmax function maps the bounded densities to positive fractions summing to one, rescaled to the domain length and floored at \(h_{\min}\), which secures (i)--(iii); requirement (iv) holds because only the number of sample points, not the network, changes with the level. We now make each step precise.

\paragraph{Mesh-density network.}
The density is a coordinate-based network
\begin{equation}
\label{eq:positional_density_definition}
\mathcal G_\phi:\mathcal P\times\widehat\Omega\to\mathbb R,
\qquad
(\nu,\xi)\mapsto \mathcal G_\phi(\nu,\xi),
\end{equation}
a fully connected multilayer perceptron (MLP) with weights and biases \(\phi=\{W_k,\bs b_k\}_{k=1}^{L}\), two hidden layers, and a smooth (\(\tanh\)) activation. It maps a problem parameter \(\nu\) and a reference coordinate \(\xi\in\widehat\Omega\) to a scalar mesh density \(d(\nu,\xi)=\mathcal G_\phi(\nu,\xi)\); where the density is larger, elements will be smaller. To build an \(m\)-element mesh we sample the continuous density \(\mathcal G_\phi(\nu,\cdot)\) once per element, at the element centers
\begin{equation}
\label{eq:cell_centers}
\xi_i^{c}=\frac{i-\tfrac12}{m},\qquad i=1,\ldots,m,
\end{equation}
giving \(m\) values \(d_i(\nu)=\mathcal G_\phi(\nu,\xi_i^{c})\). Each element needs exactly one representative density value, and its center is the natural sampling point; changing the refinement level changes only the number \(m\) of sample points \(\xi_i^{c}\), never the network \(\mathcal G_\phi\), which is a continuous function of the coordinate. The fixed map \(\mathcal M\) turns these \(m\) densities into element sizes through the three steps anticipated above. First, since only relative densities matter, we remove the additive gauge freedom by centering,
\begin{equation}
\label{eq:density_centering}
\tilde d_i(\nu)=d_i(\nu)-\frac1m\sum_{j=1}^m d_j(\nu).
\end{equation}
Second, we bound the densities to a fixed range \([-T,T]\) through
\begin{equation}
\label{eq:density_saturation}
q_i(\nu)=T\tanh\bigl(\tilde d_i(\nu)/T\bigr),
\end{equation}
where \(T>0\) is a fixed saturation amplitude. This bound caps how strongly the mesh can be graded: in the next step the ratio between the largest and smallest element is at most \(e^{2T}\), which prevents the network from collapsing elements onto a single point. Third, a softmax converts the bounded densities into positive sizes that sum to the segment length and respect a minimum element size \(h_{\min}\),
\begin{equation}
\label{eq:density_to_sizes}
h_i(\nu)=h_{\min}+\bigl(L-m\,h_{\min}\bigr)\,\frac{e^{q_i(\nu)}}{\sum\limits_{j=1}^m e^{q_j(\nu)}},
\end{equation}
so that \(\sum_i h_i=L\) and \(h_i\ge h_{\min}\) for every element. The element breakpoints follow by cumulative summation of the \(h_i\), as in Section~\ref{sec:mesh_param}. We write \(\btheta_\phi(\nu)\) for the resulting mesh; the construction is differentiable in \(\phi\), so the loss gradient passes through it. Moreover, since \(q_i\in[-T,T]\), any two element sizes produced by \(\mathcal M\) satisfy \(h_i/h_j\le e^{2T}\), so the predicted meshes fulfill the local quasi-uniformity of item~4 of Assumption~\ref{ass:admissible_mesh} with \(\gamma_{\mathrm{loc}}\le e^{2T}\), uniformly in \(\phi\) and \(\nu\); in the non-parametric setting of Section~\ref{sec:mesh_param}, \(\Theta_{\mathrm{ad}}\) is restricted accordingly.

\paragraph{Interfaces, blocks, and tensor products.}
In \(d\) dimensions, the density is evaluated independently along each parametric direction, and the mesh is the tensor product of the resulting univariate partitions. A fixed interior interface---a location where the data are non-smooth, such as a material interface where the diffusion coefficient jumps---is always carried as a pinned knot of the prescribed multiplicity, so the discretization stays conforming and \(C^0\) there for every \(\btheta\); together with the \(h_{\min}\) floor and the fixed total element count, this keeps the mesh admissible independently of \(\btheta\). Two treatments of such an interface are available. In the \emph{block} treatment the interface is a segment boundary that splits a direction into blocks, each a segment with its own element budget and its own centers \(\xi_i^{c}\), so the per-block element counts are frozen and the interface keeps a fixed index in the knot vector. This is the appropriate choice when that index must stay static, as in the L-shape of Section~\ref{subsec:exp4_lshape}, whose immersed (Dirichlet-masked) trimmed region is bounded by the interface lines \(x=0.5\) and \(y=0.5\): each direction splits into two blocks and the tensor-product mesh carries four spacing vectors in total (Figure~\ref{fig:blocks}). In the \emph{free-split} treatment a single budget spans the whole direction and the interface is inserted at its fixed location and multiplicity rather than acting as a segment boundary, so the number of elements on each side is itself governed by \(\btheta\); this is preferable when the physics rewards moving resolution across the interface, as in the one-dimensional contrast Helmholtz problem of Section~\ref{subsec:exp_helmholtz}, where the short-wavelength layer draws elements across \(x=0.5\) (Figure~\ref{fig:knot_redistribution}). In both treatments the same network \(\mathcal G_\phi\) predicts the density, blocks being distinguished by an extra block-identity input, and anchor pinning, the \(h_{\min}\) floor, the clamped ends and the fixed total count are unchanged. The classical density view of moving meshes and optimal knot placement underlies this construction~\cite{deboor1974good,deboor1974good2,budd2009adaptive,huang2010adaptive}, here realized as a coordinate-based neural field~\cite{sitzmann2020implicit}.

\begin{figure}[htbp]
\centering
\includegraphics[width=0.5\linewidth]{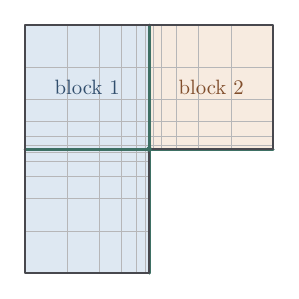}
\caption{Block decomposition on the L-shaped domain. The interface lines at \(x=0.5\) and \(y=0.5\) (green) split each direction into two blocks; block~1 and block~2 label those of the \(x\)-direction. The knot lines are graded toward the re-entrant corner.}
\label{fig:blocks}
\end{figure}

\subsection{Residual loss and uniform reliability}
\label{subsec:inn_loss}

Let \(\{\nu_i\}_{i=1}^n\subset\mathcal P\) be the training set. We train \(\phi\) by minimizing a normalized residual loss,
\begin{equation}
\label{eq:parametric_training_loss}
\mathcal J_{\rm res}(\phi)
=
\frac1{2n}\sum_{i=1}^n
\frac{\eta^2(\btheta_\phi(\nu_i);\nu_i)}
     {\eta^2(\btheta_{\rm unif};\nu_i)+\varepsilon},
\end{equation}
where \(\btheta_{\rm unif}\) is the uniform mesh at the same level and \(\varepsilon>0\) avoids division by zero. Dividing by the uniform-mesh residual makes each term dimensionless and prevents parameters with larger residuals from dominating the training loss. It is not a reference error and needs no precomputed optimal meshes.

\begin{proposition}[Uniform reliability]
\label{prop:parametric_reliability}
Suppose Assumptions~\ref{ass:coercive_regime} and~\ref{ass:admissible_mesh} hold uniformly for \(\nu\in\mathcal P\), and the coefficients and data depend continuously on \(\nu\) in the norms used in the estimator. Then there exists a constant \(\bar C_{\rm rel}<\infty\), independent of \(\nu\) and of the admissible mesh, such that
\begin{equation}
\label{eq:parametric_reliability}
\norm{u_\nu-u_{h,\btheta,\nu}}_{\mathcal E,\nu}
\le
\bar C_{\rm rel}\,\eta(\btheta;\nu),
\qquad
\forall\, \nu\in\mathcal P.
\end{equation}
\end{proposition}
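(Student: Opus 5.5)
The plan is to reduce the statement to Proposition~\ref{prop:dual_reliability} applied pointwise in \(\nu\), and then to show that the resulting constant \(C_{\rm rel}(\nu)\) admits a uniform upper bound over the compact set \(\mathcal P\). For each fixed \(\nu\in\mathcal P\), the hypotheses of Proposition~\ref{prop:dual_reliability} hold by assumption: Assumptions~\ref{ass:coercive_regime} and~\ref{ass:admissible_mesh} are in force, and the Dirichlet data are imposed exactly. Hence
\[
\|u_\nu-u_{h,\btheta,\nu}\|_{\mathcal E,\nu}\le \|\mathfrak R_{h,\nu}(\btheta)\|_{V'_\nu}\le C_{\rm rel}(\nu)\,\eta(\btheta;\nu)
\]
for every admissible \(\btheta\). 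The first inequality is governed by the coercivity identity~\eqref{eq:coercivity_identity}, whose constant is exactly \(1\) for every \(\nu\). So all the \(\nu\)-dependence sits in the second step, and it suffices to set \(\bar C_{\rm rel}:=\sup_{\nu\in\mathcal P}C_{\rm rel}(\nu)\) and show it is finite.

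To control this supremum, I would go back into the proof of Proposition~\ref{prop:dual_reliability} and track what \(C_{\rm rel}\) actually depends on. The constant arises only from three sources:
\begin{itemize}
\item the weighted quasi-interpolation estimates
\(\|v-I_hv\|_{L^2(E)}\le C\rho_E\|v\|_{\mathcal E(\omega_E)}\) and \(\|v-I_hv\|_{L^2(F)}\le C(h_F/\sigma_F)^{1/2}\|v\|_{\mathcal E(\omega_F)}\);
\item the finite-overlap constant of the patches;
\item Cauchy--Schwarz, which contributes nothing \(\nu\)-dependent.
\end{itemize}
The spline degree and continuity, the shape-regularity constant \(\gamma_{\rm sh}\), the quasi-uniformity constant \(\gamma_{\rm loc}\), the geometry map, and \(h_{\min}\) are all independent of \(\nu\); the geometry and mesh topology are fixed and Assumption~\ref{ass:admissible_mesh} holds uniformly. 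For the predicted meshes \(\btheta_\phi(\nu)\) we also have \(\gamma_{\rm loc}\le e^{2T}\) uniformly in \(\nu\) and \(\phi\). What remains is the coefficient dependence, namely on \(\sigma_{\min}(\nu)\), \(\sigma_{\max}(\nu)\), \(\|\bs\beta_\nu\|_{W^{1,\infty}}\), \(\|\alpha_\nu\|_{L^\infty}\), and the local values \(\mu_E(\nu)\) entering \(\rho_E\) and \(\|\cdot\|_{\mathcal E,\nu}\). I would write \(C_{\rm rel}(\nu)\le \Phi\bigl(\sigma_{\min}(\nu)^{-1},\sigma_{\max}(\nu),\|\bs\beta_\nu\|_{W^{1,\infty}},\|\alpha_\nu\|_{L^\infty}\bigr)\) with \(\Phi\) monotone and finite on finite arguments.

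The continuity and compactness step then finishes the proof. The maps \(\nu\mapsto\sigma_{\min}(\nu)^{-1}\), \(\sigma_{\max}(\nu)\), and the other coefficient norms are continuous by hypothesis. Uniform positivity at each \(\nu\) together with continuity keeps \(\sigma_{\min}\) bounded away from zero on the compact set \(\mathcal P\), so each argument of \(\Phi\) is bounded. Therefore \(\bar C_{\rm rel}\le\Phi(\sup\cdot,\dots)<\infty\).

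The main obstacle is the weighted interpolation estimate, because \(\rho_E\) involves \(\min\{h_E\sigma_E^{-1/2},\mu_E^{-1/2}\}\) and \(\mu_E(\nu)\) can degenerate to \(0\) on part of \(\mathcal P\). One must check that the constant \(C\) in that estimate is robust, in the Verf\"urth sense, as \(\mu_E\to0\). I would verify this case by case: when \(\mu_E h_E^2\le\sigma_E\), use the \(H^1\)-seminorm bound; otherwise use the \(L^2\)-stability of \(I_h\) weighted by \(\mu_E^{1/2}\). In neither case does the constant depend on the size of \(\mu_E\), only on the coefficient bounds above. The one subtle point is when \(\mu\equiv0\) for some \(\nu\), since then the energy norm is a norm only through the condition \(|\Gamma_D|>0\), which is \(\nu\)-independent. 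Because the energy norm is never inverted via Poincaré in the reliability argument, this causes no \(\nu\)-dependence either.
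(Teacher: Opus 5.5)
Your proposal is correct and follows essentially the same route as the paper: you apply Proposition~\ref{prop:dual_reliability} for each fixed $\nu$, use that the coercivity constant is exactly $1$ and that the mesh and geometry constants are $\nu$-independent, and bound the coefficient-dependent part of $C_{\rm rel}(\nu)$ by continuity in $\nu$ and compactness of $\mathcal P$. The paper tracks only $\sigma_{\min}$, making its continuity explicit via $|\sigma_{\min}(\nu)-\sigma_{\min}(\nu')|\le\|\sigma(\cdot\,;\nu)-\sigma(\cdot\,;\nu')\|_{L^\infty(\Omega)}$, whereas you also track $\sigma_{\max}$, $\alpha$ and $\bs\beta$.

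Your closing $\mu_E$-robustness discussion goes beyond what the statement needs. In particular, the weighted $L^2$-stability step in the case $\mu_E h_E^2>\sigma_E$ would require $\mu$ to be comparable across $\omega_E$. That step is not needed: the plain bound $\|v-I_hv\|_{L^2(E)}\le C h_E\sigma_{\min}^{-1/2}\|v\|_{\mathcal E(\omega_E)}$, combined with $h_E/\rho_E\le\max\{\sigma_{\max}^{1/2},\operatorname{diam}(\Omega)\|\mu\|_{L^\infty}^{1/2}\}$, already gives a constant that is uniform over the compact set $\mathcal P$.
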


\begin{proof}
For each fixed \(\nu\), Proposition~\ref{prop:dual_reliability} gives a constant \(C_{\rm rel}(\nu)\) depending only on the polynomial degree, continuity, geometry bounds, shape regularity, and coefficient bounds. Three facts make the supremum finite. First, Assumption~\ref{ass:admissible_mesh} holds uniformly in \(\nu\), so the shape-regularity constant \(\gamma_{\rm sh}\) and the geometry constants are \(\nu\)-independent. Second, the coercivity constant equals \(1\) uniformly due to the coercivity identity, so no inf--sup factor enters \(C_{\rm rel}(\nu)\). Third, since \(|\sigma_{\min}(\nu)-\sigma_{\min}(\nu')|\le\|\sigma(\cdot\,;\nu)-\sigma(\cdot\,;\nu')\|_{L^\infty(\Omega)}\), the map \(\nu\mapsto\sigma_{\min}(\nu)\) is continuous; by compactness of \(\mathcal P\), it attains a positive minimum on \(\mathcal P\), bounding the coefficient-dependent part of \(C_{\rm rel}(\nu)\) from above. Hence \(\bar C_{\rm rel}:=\sup_{\nu\in\mathcal P} C_{\rm rel}(\nu)<\infty\).
\end{proof}

\noindent The effectivity index, by contrast, can still vary with \(\nu\) and with the mesh. We report it as a diagnostic and keep the relative energy or \(H^1\) error as the primary accuracy metric.

\subsection{Offline training and online prediction}
\label{subsec:inn_offline_protocols}

\paragraph{Offline training.}
For a fixed sample \(\nu_i\), the reduced objective depends on \(\phi\) through \(\btheta_\phi(\nu_i)\) and through the Galerkin solution \(\mathbf U_i\), defined by
\begin{equation}
\label{eq:parametric_linear_system}
\mathbf K(\btheta_\phi(\nu_i),\nu_i)\,\mathbf U_i
=
\mathbf F(\btheta_\phi(\nu_i),\nu_i).
\end{equation}
Writing the per-sample loss as
\[
\Psi_i(\mathbf U,\btheta)
=
\frac12
\frac{\eta^2(\btheta;\nu_i)}
     {\eta^2(\btheta_{\rm unif};\nu_i)+\varepsilon},
\]
the gradient \(\nabla_\phi\mathcal J_{\rm res}\) is obtained by reverse-mode AD through~\eqref{eq:parametric_linear_system} and the density-to-size map~\eqref{eq:density_to_sizes}, as discussed in Section~\ref{subsec:diff_solve}. This is equivalent to the implicit adjoint solve
\begin{equation}
\label{eq:parametric_adjoint}
\mathbf K(\btheta_\phi(\nu_i),\nu_i)^T\bs\lambda_i
=
\partial_{\mathbf U}\Psi_i(\mathbf U_i,\btheta_\phi(\nu_i))^T,
\end{equation}
followed by the chain rule through \(\btheta_\phi\). Averaging over a mini-batch gives a stochastic gradient of \(\mathcal J_{\rm res}\), which we minimize with Adam~\cite{kingma2015adam}. We use two initializations. Direct: start from \(\mathcal G_\phi(\nu)\approx\text{const}\), so every predicted mesh begins uniform. Coarse-to-fine continuation: train the network at a coarse level, then use the trained weights to initialize training at finer levels. Continuation relies on the level-independence of~\eqref{eq:positional_density_definition} and stabilizes training when the solution is strongly singular; each experiment states which one it uses.

\paragraph{Online prediction.}
For a new \(\nu\notin\{\nu_i\}\), the trained residual-informed neural mesh gives
\begin{equation}
\label{eq:online_prediction}
\btheta_{\rm pred}=\btheta_{\phi^\star}(\nu)
\end{equation}
in one forward pass through \(\mathcal G_{\phi^\star}\), the centering~\eqref{eq:density_centering}, the saturation~\eqref{eq:density_saturation}, and the reconstruction \(\mathcal M\). One Galerkin solve on \(\btheta_{\rm pred}\),
\[
\mathbf K(\btheta_{\rm pred},\nu)\,\mathbf U_{\rm pred}
=\mathbf F(\btheta_{\rm pred},\nu),
\]
then gives the solution, with \(\eta(\btheta_{\rm pred};\nu)\) as a quality check. There is no online optimization: a new \(\nu\) costs one forward pass plus one Galerkin solve.

\begin{algorithm}[htbp]
\caption{Parametric residual-informed neural mesh}
\label{alg:parametric-radapt}
\begin{algorithmic}[1]
\Statex \textbf{Offline training}
\Require Degree $p$, continuity $c$; an increasing sequence of element counts
$N_0<\cdots<N_L$ (per direction) for the coarse-to-fine levels; minimum element size $h_{\min}$; grading cap
$T$; training and validation samples
$\{\nu_i\}_{\rm train},\{\nu_j\}_{\rm val}\subset\mathcal P$; density network
$\mathcal G_\phi$; optimizer settings $(K_{\max},\{\gamma_k\})$; validation
patience $K_{\rm pat}$.
\State Initialize $\phi$ so that $\mathcal G_\phi(\nu)\approx\text{const}$ (uniform meshes); set $J^\star\gets+\infty$.
\For{$\ell=0,1,\ldots,L$} \Comment{coarse-to-fine; the same weights apply at every level via the collocation~\eqref{eq:positional_density_definition}}
    \For{$k=0,1,\ldots,K_{\max}-1$}
        \State Sample a mini-batch $\mathcal B\subseteq\{\nu_i\}_{\rm train}$.
        \For{$\nu_i\in\mathcal B$}
            \State Predict $\btheta_{\phi}(\nu_i)$: forward pass $\mathcal G_\phi$, gauge fix~\eqref{eq:density_centering}, saturation~\eqref{eq:density_saturation}, map $\mathcal M$ at level $N_\ell$.
            \State Solve $\mathbf K(\btheta_\phi(\nu_i),\nu_i)\,\mathbf U_i=\mathbf F(\btheta_\phi(\nu_i),\nu_i)$ and evaluate the residual loss $\Psi_i$.
        \EndFor
        \State Form $\mathcal J_{\rm res}(\phi)$ over $\mathcal B$ (normalized as in~\eqref{eq:parametric_training_loss}) and compute $\nabla_\phi\mathcal J_{\rm res}$ by reverse-mode AD through the solution equation~\eqref{eq:parametric_adjoint} and the density-to-size map.
        \State Update $\phi\gets\operatorname{Adam}(\phi,\nabla_\phi\mathcal J_{\rm res};\gamma_k)$.
        \State Evaluate $\mathcal J_{\rm res}^{\rm val}$ on $\{\nu_j\}_{\rm val}$; if $\mathcal J_{\rm res}^{\rm val}<J^\star$, set $(J^\star,\phi^\star)\gets(\mathcal J_{\rm res}^{\rm val},\phi)$.
        \If{$\mathcal J_{\rm res}^{\rm val}$ has not improved for $K_{\rm pat}$ consecutive iterations}
            \State \textbf{break} \Comment{per-level early stopping on held-out $\nu$}
        \EndIf
    \EndFor
\EndFor
\State \Return trained weights $\phi^\star$.
\Statex \textbf{Online prediction} (new $\nu\notin\{\nu_i\}_{\rm train}$, no optimization)
\State Predict $\btheta_{\rm pred}=\btheta_{\phi^\star}(\nu)$ in one forward pass~\eqref{eq:online_prediction}.
\State Solve $\mathbf K(\btheta_{\rm pred},\nu)\,\mathbf U_{\rm pred}=\mathbf F(\btheta_{\rm pred},\nu)$; compute $\eta(\btheta_{\rm pred};\nu)$.
\State \Return predicted mesh $\btheta_{\rm pred}$, solution $\mathbf U_{\rm pred}$, estimator $\eta$.
\end{algorithmic}
\end{algorithm}

\section{Numerical experiments}\label{sec:numerical_examples}

We compare two discretizations at a common number of degrees of freedom: throughout, \(N\) denotes the number of elements per direction, and the uniform and adapted meshes share \(p\), the continuity, and hence the number of degrees of freedom at every level. The solution \(u_h\) is computed on the uniform mesh of the given
refinement level; the \(r\)-adaptive solution \(u_\theta\) is computed on the
mesh predicted by the trained density network \(\mathcal G_{\phi^\star}\)
via~\eqref{eq:online_prediction}. For problems with a known exact solution, we
use it as the reference solution \(u^\ast\); otherwise we use a high-resolution
reference solution. The primary accuracy metric is the relative
\(H^1\)-seminorm error
\begin{equation}
\label{eq:relative_h1_error}
\Hrel(\nu)
=
\frac{|u^\ast-u_{(\cdot),\nu}|_{H^1(\Omega)}}
     {|u^\ast|_{H^1(\Omega)}},
\end{equation}
and we report the residual-based effectivity index
\begin{equation}
\label{eq:effectivity_index}
\Ieff(\nu)
=
\frac{\eta(\btheta;\nu)}
     {|u^\ast-u_{(\cdot),\nu}|_{H^1(\Omega)}}
\end{equation}
as an a posteriori diagnostic: the estimator defines the training objective, whereas~\eqref{eq:relative_h1_error} is used for offline evaluation. The reported values are medians over four random seeds at each refinement level. Convergence histories are shown in the figures, and effectivity indices are reported in the corresponding tables.

\paragraph{Training and evaluation protocol.}

For each parametric experiment, \(\mathcal P\) is sampled and split into disjoint training, validation, and test subsets (70/15/15), using the same fixed seed across degrees and methods; in Experiments~3--5 the corners of \(\mathcal P\) are additionally forced into the training set. Adam minimizes the normalized residual loss~\eqref{eq:parametric_training_loss}; validation is used for the per-level early stopping in Algorithm~\ref{alg:parametric-radapt}, and all offline metrics~\eqref{eq:relative_h1_error} are reported exclusively on the held-out test subset. All tabulated values are medians over the four seeds and the held-out test parameters of each experiment, and every inline improvement factor is the ratio of these tabulated medians. Evaluation levels outside each trained ladder (\(N\in\{128,256\}\) in Experiment~1, \(N\in\{32,48,256\}\) in Experiment~2, and \(N=64\) in Experiments~3--5) are zero-shot: the network weights are frozen after the last trained level.

\paragraph{Implementation and reproducibility.}

All experiments use splines of maximal interior continuity \(c=p-1\), except
across interface lines, where the knot multiplicity is raised to \(p\) (\(C^0\)):
the material interface \(x=1/2\) in Experiment~2 and the corner lines
\(x=1/2\), \(y=1/2\) in Experiment~4. The positional density network has two hidden layers of width \(32\) with \(\tanh\) activations (three of width \(64\) for advection--diffusion), collocated at element centers~\eqref{eq:cell_centers}, so one set of weights serves every refinement level. The mesh predictor uses the grading cap \(T\) and minimum element size \(h_{\min}\)~\eqref{eq:density_saturation}. Experiments~2--5 use \(T=5\) and \(h_{\min}=10^{-7}\); in Experiment~1 the cap increases with the refinement level, from \(T=2\) to \(T=5\) for \(p=2\) and from \(T=6\) to \(T=7\) for \(p=3\), with \(h_{\min}\in\{10^{-7},10^{-8}\}\). Sharp layers are integrated with higher-order Gauss--Legendre quadrature, used consistently for the assembly, the estimator, and the reported error norms, while the singular source is integrated analytically. The parameter sets contain \(448\) samples in Experiment~1 (\(314/67/67\)), \(100\) in Experiment~2 (\(70/15/15\)), \(2000\) in Experiment~3 (\(1400/300/300\)), and \(400\) in each of Experiments~4--5 (\(280/60/60\)); the normalization guard in~\eqref{eq:parametric_training_loss} is \(\varepsilon=10^{-12}\); Adam uses a two-stage exponentially decaying learning rate (\(10^{-2}\!\to\!10^{-4}\) in one dimension, \(10^{-3}\!\to\!10^{-4}\) in two dimensions), global-norm gradient clipping at \(1\), and a two-epoch linear warm-up at each level; early stopping monitors the validation loss with patience \(K_{\mathrm{pat}}=40\) epochs at a \(1\%\) relative-improvement tolerance, a \(40\)-epoch minimum, and a cap of \(K_{\max}=400\) epochs per level; the batch size is \(16\). In Experiment~1 the density network additionally receives the input feature \(\log(\xi_i^{c}+1/N)\), making its collocation level-aware near the singular endpoint. An optional per-instance corrector (L-BFGS-B on \(\eta^{2}\)) is implemented in the code base but disabled in all reported experiments. Reported values are medians over four random seeds, and the shaded bands in the convergence figures show the interquartile range over the pooled seeds and held-out test parameters.

\subsection{One-dimensional singular-power family}
\label{subsec:exp1_singular_power}

We consider the model problem~\eqref{eq:general_pde} with \(\sigma\equiv1\) and \(\bs\beta=\alpha=0\) on the domain \(\Omega = (0,1)\). The parameter is the singularity exponent \(\nu\), and the exact solution is the singular power
\begin{equation}
\label{eq:exp1_exact_solution}
u^\ast(x)=x^\nu,\qquad \nu\in[1.55,1.95],
\end{equation} 
so that \(-u^{\ast\prime\prime}=\nu(1-\nu)x^{\nu-2}\). Since \(u^\ast\) is only \(H^{1+\nu-1/2-\epsilon}\), a uniform mesh cannot reach the optimal rate. We use \(p\in\{2,3\}\) and \(N\) up to \(256\), and integrate the \(x^{\nu-2}\) term exactly. Figure~\ref{fig:conv-problem1-p2p3} reports the convergence: the uniform mesh is limited to the singularity rate \(N^{-(\nu-1/2)}\) (fitted slope \(\approx1.22\)), whereas the \(r\)-adaptive mesh recovers the optimal \(N^{-p}\) for both degrees, the adapted cubic error being 349 times smaller than the uniform one at \(N=64\). The effectivity indices in Table~\ref{tab:singular_summary} are not close to one, so the estimator overestimates the true \(H^1\)-seminorm error, but they remain stable under refinement and track the error trend consistently. Figures~\ref{fig:second-derivative} and~\ref{fig:solutions} show the mechanism: the adapted knots cluster at the singular endpoint \(x=0\), so the discrete second derivative follows the exact one into the high-curvature region. The training histories at \(N=4\) (Figure~\ref{fig:loss_P1}) show the residual loss decreasing to a stable plateau for both degrees, confirming convergence to a stationary point of the discrete residual.

\begin{figure}[htbp]
    \centering
    \begin{subfigure}[b]{0.49\textwidth}
        \centering
        \includegraphics[width=\linewidth]{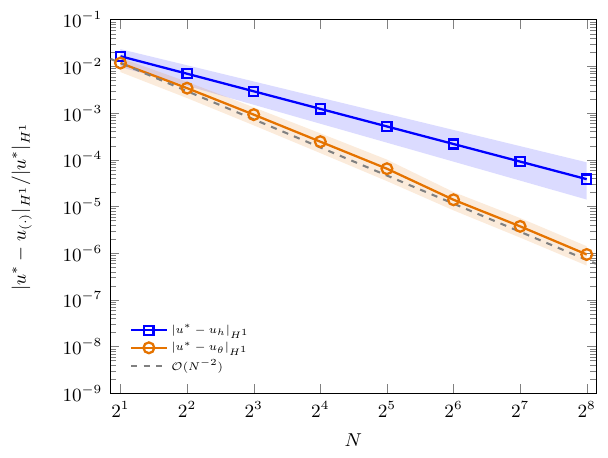}
        \caption{Quadratic B-splines (\(p=2\)).}
        \label{fig:conv-problem1-p2}
    \end{subfigure}
    \hfill
    \begin{subfigure}[b]{0.49\textwidth}
        \centering
        \includegraphics[width=\linewidth]{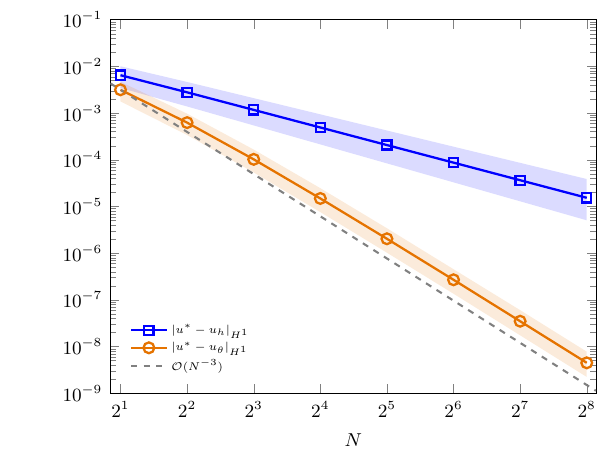}
        \caption{Cubic B-splines (\(p=3\)).}
        \label{fig:conv-problem1-p3}
    \end{subfigure}
    \caption{\textbf{Experiment~1.} Relative \(H^1\)-seminorm error versus the number of elements \(N\) for the uniform and \(r\)-adaptive meshes. Curves are medians and shaded bands the interquartile range, over seeds and the held-out test exponents.}
    \label{fig:conv-problem1-p2p3}
\end{figure}

\begin{table}[t]
\centering
\small
\caption{\textbf{Experiment 1.} Median relative $H^1$ error $|u^\ast-u_{(\cdot)}|_{H^1}/|u^\ast|_{H^1}$ and median effectivity index $I_{\mathrm{eff}}=\eta/|u^\ast-u_{(\cdot)}|_{H^1}$ per level $N$, for the uniform ($u_h$) and adapted ($u_\theta$) meshes, at $p=2$ and $p=3$. Medians pool the four seeds and the held-out test parameters.}
\label{tab:singular_summary}
\begin{adjustbox}{max width=\textwidth}
\begin{tabular}{c cccc @{\hspace{1.0em}} cccc}
\toprule
& \multicolumn{4}{c}{$p=2$} & \multicolumn{4}{c}{$p=3$} \\
\cmidrule(lr){2-5} \cmidrule(lr){6-9}
& \multicolumn{2}{c}{$H^1_{\mathrm{rel}}$} & \multicolumn{2}{c}{$I_{\mathrm{eff}}$} & \multicolumn{2}{c}{$H^1_{\mathrm{rel}}$} & \multicolumn{2}{c}{$I_{\mathrm{eff}}$} \\
\cmidrule(lr){2-3} \cmidrule(lr){4-5} \cmidrule(lr){6-7} \cmidrule(lr){8-9}
$N$ & $u_h$ & $u_\theta$ & $u_h$ & $u_\theta$ & $u_h$ & $u_\theta$ & $u_h$ & $u_\theta$ \\
\midrule
2 & $2.01{\times}10^{-2}$ & $1.41{\times}10^{-2}$ & $14.43$ & $13.49$ & $8.14{\times}10^{-3}$ & $3.97{\times}10^{-3}$ & $28.45$ & $25.32$ \\
4 & $8.69{\times}10^{-3}$ & $4.01{\times}10^{-3}$ & $14.37$ & $12.00$ & $3.57{\times}10^{-3}$ & $7.88{\times}10^{-4}$ & $27.93$ & $18.95$ \\
8 & $3.73{\times}10^{-3}$ & $1.10{\times}10^{-3}$ & $14.35$ & $11.22$ & $1.53{\times}10^{-3}$ & $1.30{\times}10^{-4}$ & $27.91$ & $14.54$ \\
16 & $1.60{\times}10^{-3}$ & $2.96{\times}10^{-4}$ & $14.35$ & $10.87$ & $6.58{\times}10^{-4}$ & $1.91{\times}10^{-5}$ & $27.91$ & $10.52$ \\
32 & $6.87{\times}10^{-4}$ & $8.01{\times}10^{-5}$ & $14.34$ & $10.94$ & $2.82{\times}10^{-4}$ & $2.62{\times}10^{-6}$ & $27.91$ & $7.54$ \\
64 & $2.95{\times}10^{-4}$ & $1.71{\times}10^{-5}$ & $14.34$ & $7.88$ & $1.21{\times}10^{-4}$ & $3.47{\times}10^{-7}$ & $27.91$ & $6.78$ \\
128 & $1.26{\times}10^{-4}$ & $4.57{\times}10^{-6}$ & $14.34$ & $7.75$ & $5.19{\times}10^{-5}$ & $4.51{\times}10^{-8}$ & $27.91$ & $7.90$ \\
256 & $5.42{\times}10^{-5}$ & $1.14{\times}10^{-6}$ & $14.34$ & $7.75$ & $2.23{\times}10^{-5}$ & $5.80{\times}10^{-9}$ & $27.91$ & $7.57$ \\
\bottomrule
\end{tabular}
\end{adjustbox}
\end{table}

\begin{figure}[htbp]
    \centering
    \begin{subfigure}[b]{0.49\textwidth}
        \centering
        \includegraphics[width=\linewidth]{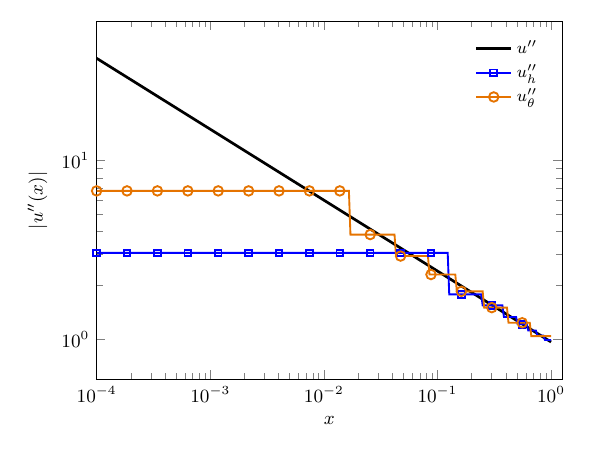}
        \caption{Quadratic B-splines (\(p=2\)).}
        \label{fig:second-derivative-p2}
    \end{subfigure}
    \hfill
    \begin{subfigure}[b]{0.49\textwidth}
        \centering
        \includegraphics[width=\linewidth]{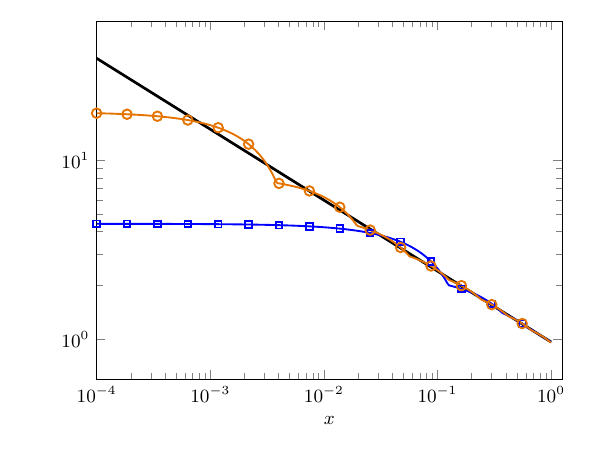}
        \caption{Cubic B-splines (\(p=3\)).}
        \label{fig:second-derivative-p3}
    \end{subfigure}
    \caption{\textbf{Experiment~1.} Log--log profiles of the exact second derivative
      and that of the numerical approximation near \(x=0\), for the uniform (blue) and
      adapted (orange) meshes, at \(\nu=1.6038\) and \(N=8\).}
    \label{fig:second-derivative}
\end{figure}

\begin{figure}[htbp]
    \centering
    \begin{subfigure}[b]{\textwidth}
        \centering
        \includegraphics[width=\linewidth]{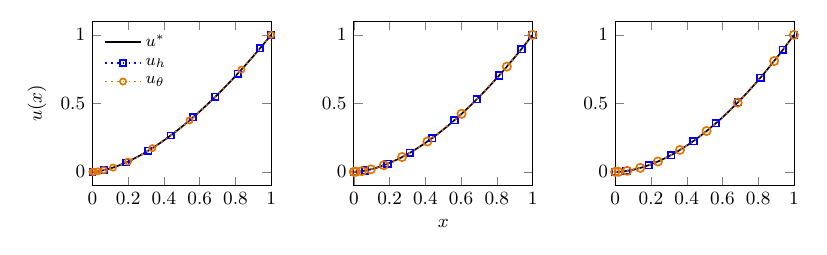}
        \caption{Quadratic B-splines (\(p=2\)).}
        \label{fig:solution_P1_p2}
    \end{subfigure}
    \vspace{0.35cm}
    \begin{subfigure}[b]{\textwidth}
        \centering
        \includegraphics[width=\linewidth]{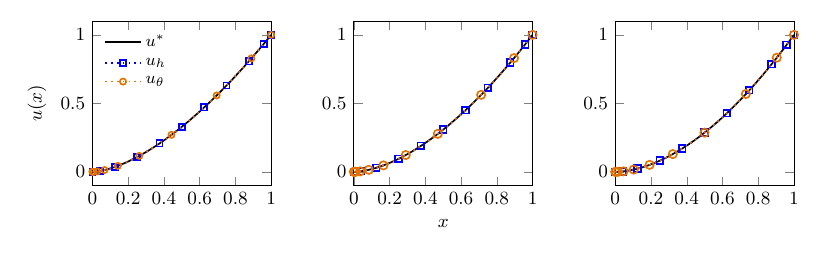}
        \caption{Cubic B-splines (\(p=3\)).}
        \label{fig:solution_P1_p3}
    \end{subfigure}
    \caption{\textbf{Experiment~1.} Exact solutions and their uniform (\(u_h\), blue)
      and \(r\)-adaptive (\(u_\theta\), orange) approximations at \(N=8\), for the
      representative exponents \(\nu\in\{1.6038,\,1.6974,\,1.8037\}\). Markers indicate
      Greville abscissae.}
    \label{fig:solutions}
\end{figure}

\begin{figure}[htbp]
    \centering
    \begin{subfigure}[t]{0.48\textwidth}
        \centering
        \includegraphics[scale=1]{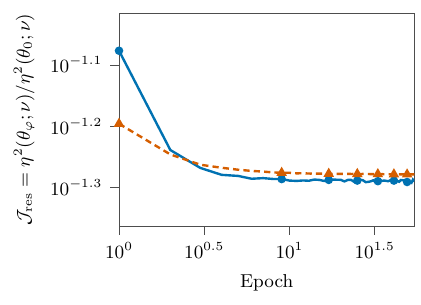}
        \caption{Quadratic B-splines (\(p=2\)).}
        \label{fig:loss_P1_p2}
    \end{subfigure}
    \hfill
    \begin{subfigure}[t]{0.48\textwidth}
        \centering
        \includegraphics[scale=1]{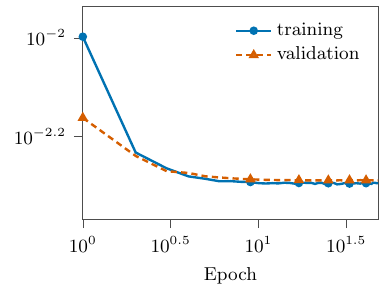}
        \caption{Cubic B-splines (\(p=3\)).}
        \label{fig:loss_P1_p3}
    \end{subfigure}
    \caption{\textbf{Experiment~1.} Training and validation losses \(\mathcal{J}_{\rm res}\) versus Adam epoch at the fixed refinement level \(N=4\), for (a)~\(p=2\) and (b)~\(p=3\).}
    \label{fig:loss_P1}
\end{figure}

\subsection{One-dimensional Helmholtz regime}
\label{subsec:exp_helmholtz}

We now consider the model problem~\eqref{eq:general_pde} on \(\Omega=(0,1)\)
with \(\bs\beta=0\), the piecewise-constant diffusion \(\sigma=\sigma_1=1\) on
\((0,1/2)\) and \(\sigma=\sigma_2=4\) on \((1/2,1)\), and the
piecewise-constant negative reaction \(\alpha=-\rho<0\) with
\(\rho=\rho_1=25\pi^{2}c^{2}\) on \((0,1/2)\) and \(\rho=\rho_2=100\pi^{2}\)
on \((1/2,1)\), which gives the indefinite Helmholtz transmission problem
\begin{equation}
  -(\sigma u')' - \rho\,u = 0 \qquad \text{in } \Omega=(0,1),
  \label{eq:helmholtz-pde}
\end{equation}
with \(u(0)=0\), the Neumann condition \(\sigma u'(1)=10\pi\), and continuity
of \(u\) and of the flux \(\sigma u'\) at the interface \(x_I=1/2\). The local
wavenumbers are \(k_j=\sqrt{\rho_j/\sigma_j}\), so that \(k_2=5\pi\) is fixed
while \(k_1=5\pi c\); the parameter \(c=k_1/k_2\in[1.5,6.0]\) is the
wavenumber contrast between the two subdomains. The exact solution is
\begin{equation}
\label{eq:exp2_exact_solution}
u^\ast_c(x)=
\begin{cases}
A\sin(k_1 x), & x\in[0,1/2],\\[2pt]
C\sin(k_2 x)+D\cos(k_2 x), & x\in[1/2,1],
\end{cases}
\end{equation}
with \((A,C,D)\) determined by the two interface conditions and the Neumann
condition; the Dirichlet condition at \(x=0\) holds by construction. The
solution is piecewise analytic, with a derivative jump at the interface
induced by the flux continuity; there is no boundary singularity, and the
uniform mesh converges at the optimal rate. Here the estimator is no longer a
certified bound, only a mesh-quality functional
(Remark~\ref{rem:coefficient_regimes}); the experiment therefore tests
whether \(r\)-adaptivity reduces the error constant, and how the method
behaves at higher wavenumber contrasts. We use \(p\in\{2,3\}\) and \(N\) up to \(256\). Three implementation details specific to this experiment depart from the generic construction. First, the mesh map: a single softmax spans \((0,1)\) and the interface knot \(x_I\) is inserted by sorting, so the element split across \(x_I\) is free rather than prescribed per segment; the two interface-adjacent sub-elements are therefore not covered by the structural floor of~\eqref{eq:density_saturation} (no trained mesh violates it; minimum observed size \(3\times10^{-6}\)), and a differentiable water-filling cap \(h_e\le 2\pi/(2.5\,k_{\mathrm{loc}})\), with \(k_{\mathrm{loc}}\) the local wavenumber, enforces a Nyquist-type sampling safeguard on coarse meshes at high contrast. Second, the reported estimator uses coefficient-independent weights (\(h_E^{2}\) and \(h_B\) in place of \(h_E^{2}/\sigma_E\) and \(h_B/\sigma_B\)); on the trained meshes this changes \(\eta\) by at most \(13\%\), and Table~\ref{tab:helmholtz_summary} inherits this convention. Third, the admissible contrasts are sampled excluding narrow bands (\(\pm0.02\)) around the resonances of the transmission problem, and training begins with a short warm phase on the highest-contrast third of the range. Both meshes share the same asymptotic rate Figure~\ref{fig:conv-problem2-p2p3}:
moving nodes cannot change the asymptotic order here, so the benefit is
confined to the error constant. Across the held-out contrasts, the adapted mesh reduces the median
\(H^1\) error at \(N=128\) by a factor of \(1.85\) for \(p=2\) and \(2.57\) for
\(p=3\). The effectivity index Table~\ref{tab:helmholtz_summary} stays
\(O(1)\), between approximately \(3.0\) and \(8.5\) across the refinement levels, drifting slowly downward as \(c\) grows (per-contrast medians, not tabulated); the estimator continues to track the true error, in the absence of a reliability guarantee. The advantage is substantially larger near the upper end of the contrast range. Near resonance, at \(c=5\) and \(N=32\), a uniform mesh fails to resolve the rapid oscillations, with a relative \(H^1\) error of \(\approx24\) for \(p=2\)---the numerical solution has entirely wrong energy---whereas the adapted mesh recovers the solution to a relative error of \(\approx0.34\) Figure~\ref{fig:solutions_P2}. In this regime, beyond the reach of the coercive theory, the predicted meshes provide the largest accuracy improvement. The training histories of Figure~\ref{fig:loss_helmholtz} reflect the warm phase noted above: over the first sixty epochs, the training loss is averaged on the highest-contrast third of the range and decreases steadily; at epoch 61, when the full range is released, it steps up---by factors of 2.1 \((p=2)\) and 2.7 \((p=3)\) in the runs shown---because the average then also covers the low-contrast tuples, on which a uniform mesh is already close to optimal and the normalized residual~\eqref{eq:parametric_training_loss} is correspondingly larger. The jump is the result of a change of averaging set rather than a loss of accuracy: the validation loss, evaluated on the full held-out set throughout, does not rise, and both curves flatten before early stopping.

\begin{figure}[htbp]
    \centering
    \begin{subfigure}[b]{0.49\textwidth}
        \centering
        \includegraphics[width=\linewidth]{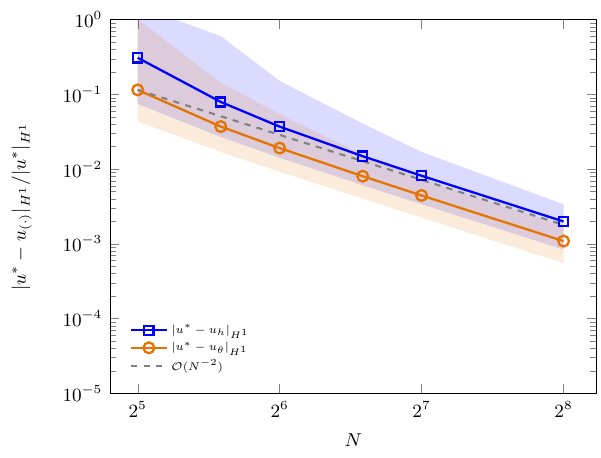}
        \caption{Quadratic B-splines (\(p=2\)).}
        \label{fig:conv-problem2-p2}
    \end{subfigure}
    \hfill
    \begin{subfigure}[b]{0.49\textwidth}
        \centering
        \includegraphics[width=\linewidth]{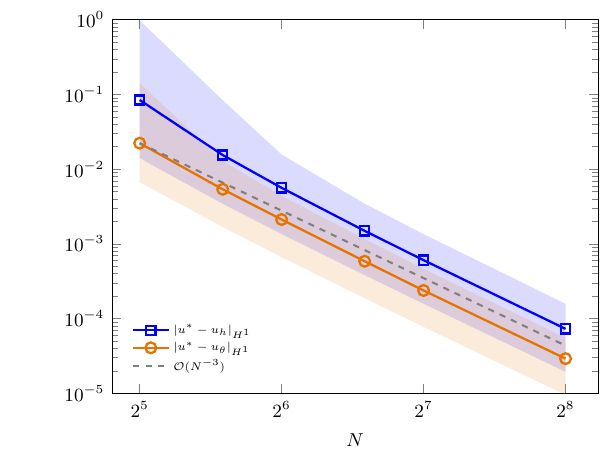}
        \caption{Cubic B-splines (\(p=3\)).}
        \label{fig:conv-problem2-p3}
    \end{subfigure}
    \caption{\textbf{Experiment~2.} Relative \(H^1\)-seminorm error versus \(N\) for the uniform and \(r\)-adaptive meshes. Curves are medians and shaded bands the interquartile range, over the four seeds and the held-out test contrasts.}
    \label{fig:conv-problem2-p2p3}
\end{figure}

\begin{figure}[htbp]
\centering
\begin{subfigure}[b]{0.48\textwidth}
    \centering
    \includegraphics[width=\linewidth]{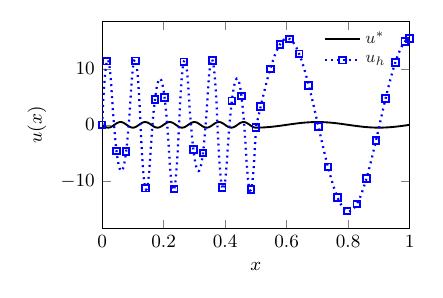}
    \caption{Uniform, \(p=2\).}
    \label{fig:P2_solution_uniform_p2}
\end{subfigure}
\hfill
\begin{subfigure}[b]{0.48\textwidth}
    \centering
    \includegraphics[width=\linewidth]{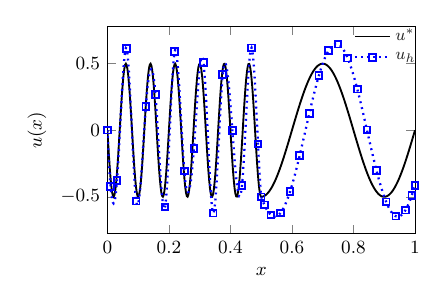}
    \caption{Uniform, \(p=3\).}
    \label{fig:P2_solution_uniform_p3}
\end{subfigure}
\vspace{0.35cm}
\begin{subfigure}[b]{0.48\textwidth}
    \centering
    \includegraphics[width=\linewidth]{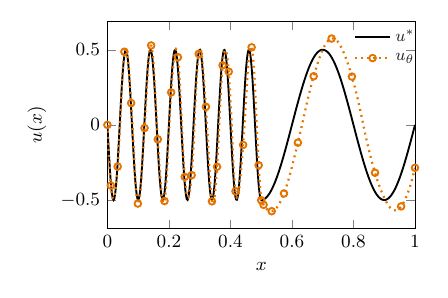}
    \caption{Adapted, \(p=2\).}
    \label{fig:P2_solution_NN_p2}
\end{subfigure}
\hfill
\begin{subfigure}[b]{0.48\textwidth}
    \centering
    \includegraphics[width=\linewidth]{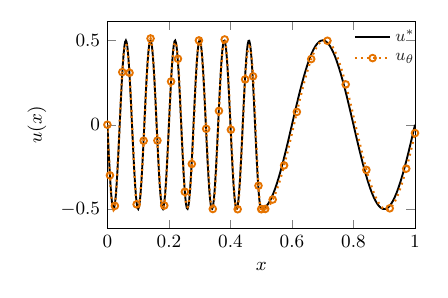}
    \caption{Adapted, \(p=3\).}
    \label{fig:P2_solution_NN_p3}
\end{subfigure}
\caption{\textbf{Experiment~2.} Exact solution and its uniform (\(u_h\), blue) and \(r\)-adaptive (\(u_\theta\), orange) approximations near resonance, at \(c=5\) and \(N=32\).}
\label{fig:solutions_P2}
\end{figure}

\begin{table}[t]
\centering
\small
\caption{\textbf{Experiment 2.} Median relative $H^1$ error $|u^\ast-u_{(\cdot)}|_{H^1}/|u^\ast|_{H^1}$ and median effectivity index $I_{\mathrm{eff}}=\eta/|u^\ast-u_{(\cdot)}|_{H^1}$ per level $N$, for the uniform ($u_h$) and adapted ($u_\theta$) meshes, at $p=2$ and $p=3$. Medians pool the four seeds and the held-out test parameters.}
\label{tab:helmholtz_summary}
\begin{adjustbox}{max width=\textwidth}
\begin{tabular}{c cccc @{\hspace{1.0em}} cccc}
\toprule
& \multicolumn{4}{c}{$p=2$} & \multicolumn{4}{c}{$p=3$} \\
\cmidrule(lr){2-5} \cmidrule(lr){6-9}
& \multicolumn{2}{c}{$H^1_{\mathrm{rel}}$} & \multicolumn{2}{c}{$I_{\mathrm{eff}}$} & \multicolumn{2}{c}{$H^1_{\mathrm{rel}}$} & \multicolumn{2}{c}{$I_{\mathrm{eff}}$} \\
\cmidrule(lr){2-3} \cmidrule(lr){4-5} \cmidrule(lr){6-7} \cmidrule(lr){8-9}
$N$ & $u_h$ & $u_\theta$ & $u_h$ & $u_\theta$ & $u_h$ & $u_\theta$ & $u_h$ & $u_\theta$ \\
\midrule
32 & $3.09{\times}10^{-1}$ & $1.13{\times}10^{-1}$ & $2.99$ & $5.30$ & $8.42{\times}10^{-2}$ & $2.31{\times}10^{-2}$ & $3.47$ & $5.22$ \\
48 & $7.93{\times}10^{-2}$ & $3.67{\times}10^{-2}$ & $5.42$ & $7.37$ & $1.54{\times}10^{-2}$ & $5.49{\times}10^{-3}$ & $5.23$ & $6.40$ \\
64 & $3.71{\times}10^{-2}$ & $1.89{\times}10^{-2}$ & $6.62$ & $7.98$ & $5.62{\times}10^{-3}$ & $2.09{\times}10^{-3}$ & $5.77$ & $6.51$ \\
96 & $1.49{\times}10^{-2}$ & $7.97{\times}10^{-3}$ & $7.35$ & $8.28$ & $1.49{\times}10^{-3}$ & $5.76{\times}10^{-4}$ & $6.17$ & $6.81$ \\
128 & $8.15{\times}10^{-3}$ & $4.40{\times}10^{-3}$ & $7.55$ & $8.40$ & $6.05{\times}10^{-4}$ & $2.35{\times}10^{-4}$ & $6.31$ & $6.94$ \\
256 & $1.99{\times}10^{-3}$ & $1.08{\times}10^{-3}$ & $7.72$ & $8.51$ & $7.28{\times}10^{-5}$ & $2.86{\times}10^{-5}$ & $6.45$ & $7.05$ \\
\bottomrule
\end{tabular}
\end{adjustbox}
\end{table}

\begin{figure}[htbp]
    \centering
    \begin{subfigure}[t]{0.48\textwidth}
        \centering
        \includegraphics[scale=1]{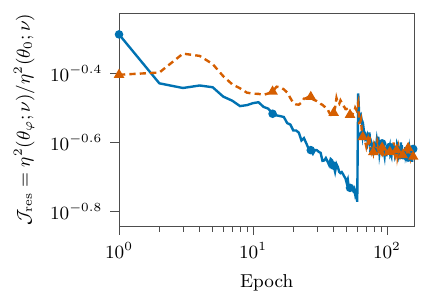}
        \caption{Quadratic B-splines (\(p=2\)).}
        \label{fig:loss_helmholtz_p2}
    \end{subfigure}
    \hfill
    \begin{subfigure}[t]{0.48\textwidth}
        \centering
        \includegraphics[scale=1]{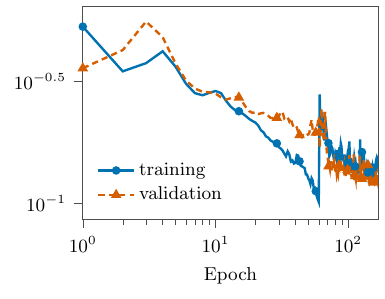}
        \caption{Cubic B-splines (\(p=3\)).}
        \label{fig:loss_helmholtz_p3}
    \end{subfigure}
    \caption{\textbf{Experiment~2.} Training and validation losses \(\mathcal{J}_{\rm res}\) versus Adam epoch at the fixed refinement level \(N=64\), for (a)~\(p=2\) and (b)~\(p=3\).}
    \label{fig:loss_helmholtz}
\end{figure}

\subsection{Two-dimensional arctangent family}
\label{subsec:exp3_arctangent}

We now consider the model problem~\eqref{eq:general_pde} with \(\sigma\equiv1\) and \(\bs\beta=\alpha=0\) on \(\Omega=(0,1)^2\), the first two-dimensional test. The manufactured solution is the smooth arctangent product
\begin{equation}
\label{eq:exp3_exact_solution}
u_\nu^\ast(x,y)=u_1(x)\,u_2(y),
\qquad \nu=(t,s_1,s_2),
\qquad
u_j(w)=\arctan\!\bigl(t(w-s_j)\bigr)+\arctan(ts_j),
\end{equation}
with \(-\Delta u_\nu^\ast=f_\nu\) in \(\Omega\), homogeneous Dirichlet conditions on \(\{x=0\}\cup\{y=0\}\), where \(u_\nu^\ast\) vanishes identically, and non-homogeneous Neumann data \(g_\nu=\sigma\nabla u_\nu^\ast\cdot\bn\) on \(\{x=1\}\cup\{y=1\}\); the Neumann faces contribute the corresponding boundary residual terms of~\eqref{eq:robust_estimator}. Here \(t\) sets the steepness of an internal layer and \((s_1,s_2)\) its position. The solution is smooth and the layer is axis-aligned, which suits tensor-product knot motion, so the test isolates the constant-factor benefit in two dimensions. We use \(p\in\{2,3\}\) and \(N\) up to \(64\) per direction, on the parameter grid of~\cite{aballay2025radaptivefiniteelementmethod}. Both meshes converge at the optimal rate \(\mathcal O(N^{-p})\) Figure~\ref{fig:conv-problem3-p2p3}, as expected for a smooth solution, and the adapted mesh again reduces the error constant: at \(N=32\) the improvement factor is \(6.1\) for \(p=2\) and \(31.6\) for \(p=3\). The effectivity indices in Table~\ref{tab:arctan_summary} are not close to one, so the estimator overestimates the true \(H^1\)-seminorm error, but they remain stable under refinement and track the error trend consistently. Since the adapted meshes are plausibly anisotropic, the reliability of Proposition~\ref{prop:dual_reliability} is
only heuristic here (Remark~\ref{rem:anisotropy}), consistent with the fixed-DOF
reading. Figure~\ref{fig:P3_solutions} shows the mechanism: the predicted knot lines concentrate along the internal layer, resolving it in both directions. The training histories at \(N=4\) Figure~\ref{fig:loss_arctangent} show the residual loss decreasing to a stable plateau for both degrees.

\begin{figure}[htbp]
    \centering
    \begin{subfigure}[b]{0.49\textwidth}
        \centering
        \includegraphics[width=\linewidth]{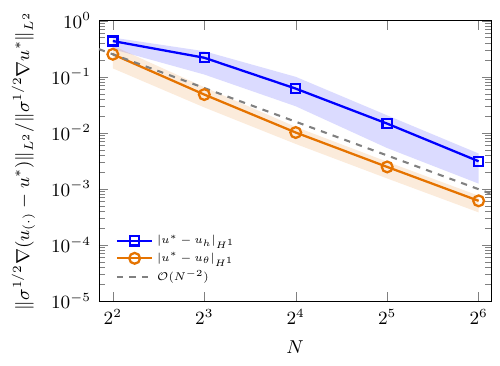}
        \caption{Quadratic B-splines (\(p=2\)).}
        \label{fig:conv-problem3-p2}
    \end{subfigure}
    \hfill
    \begin{subfigure}[b]{0.49\textwidth}
        \centering
        \includegraphics[width=\linewidth]{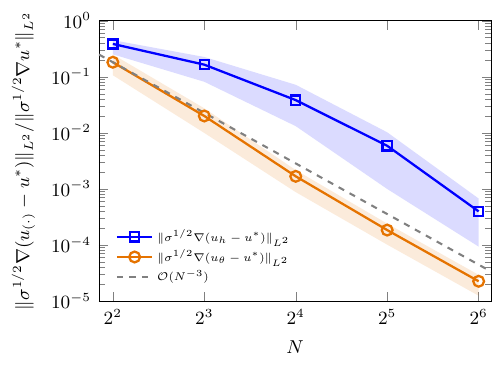}
        \caption{Cubic B-splines (\(p=3\)).}
        \label{fig:conv-problem3-p3}
    \end{subfigure}
    \caption{\textbf{Experiment~3.} Relative weighted \(H^1\)-seminorm error versus \(N\) for the uniform and \(r\)-adaptive meshes. Curves are medians and shaded bands the interquartile range, over seeds and the held-out test parameters.}
    \label{fig:conv-problem3-p2p3}
\end{figure}

\begin{table}[t]
\centering
\small
\caption{\textbf{Experiment 3.} Median relative $H^1$ error $|u^\ast-u_{(\cdot)}|_{H^1}/|u^\ast|_{H^1}$ and median effectivity index $I_{\mathrm{eff}}=\eta/|u^\ast-u_{(\cdot)}|_{H^1}$ per level $N$, for the uniform ($u_h$) and adapted ($u_\theta$) meshes, at $p=2$ and $p=3$. Medians pool the four seeds and the held-out test parameters.}
\label{tab:arctan_summary}
\begin{adjustbox}{max width=\textwidth}
\begin{tabular}{c cccc @{\hspace{1.0em}} cccc}
\toprule
& \multicolumn{4}{c}{$p=2$} & \multicolumn{4}{c}{$p=3$} \\
\cmidrule(lr){2-5} \cmidrule(lr){6-9}
& \multicolumn{2}{c}{$H^1_{\mathrm{rel}}$} & \multicolumn{2}{c}{$I_{\mathrm{eff}}$} & \multicolumn{2}{c}{$H^1_{\mathrm{rel}}$} & \multicolumn{2}{c}{$I_{\mathrm{eff}}$} \\
\cmidrule(lr){2-3} \cmidrule(lr){4-5} \cmidrule(lr){6-7} \cmidrule(lr){8-9}
$N$ & $u_h$ & $u_\theta$ & $u_h$ & $u_\theta$ & $u_h$ & $u_\theta$ & $u_h$ & $u_\theta$ \\
\midrule
4 & $4.30{\times}10^{-1}$ & $2.45{\times}10^{-1}$ & $8.67$ & $10.35$ & $3.80{\times}10^{-1}$ & $1.69{\times}10^{-1}$ & $9.30$ & $11.44$ \\
8 & $2.23{\times}10^{-1}$ & $4.72{\times}10^{-2}$ & $6.63$ & $11.95$ & $1.65{\times}10^{-1}$ & $1.90{\times}10^{-2}$ & $6.75$ & $10.98$ \\
16 & $6.09{\times}10^{-2}$ & $9.84{\times}10^{-3}$ & $7.28$ & $13.62$ & $3.56{\times}10^{-2}$ & $1.68{\times}10^{-3}$ & $5.53$ & $12.65$ \\
32 & $1.44{\times}10^{-2}$ & $2.36{\times}10^{-3}$ & $9.15$ & $13.93$ & $5.88{\times}10^{-3}$ & $1.86{\times}10^{-4}$ & $6.05$ & $12.84$ \\
64 & $3.09{\times}10^{-3}$ & $5.86{\times}10^{-4}$ & $10.46$ & $14.00$ & $3.93{\times}10^{-4}$ & $2.26{\times}10^{-5}$ & $8.09$ & $12.83$ \\
\bottomrule
\end{tabular}
\end{adjustbox}
\end{table}

\begin{figure}[htbp]
    \centering
    \includegraphics[width=\linewidth]{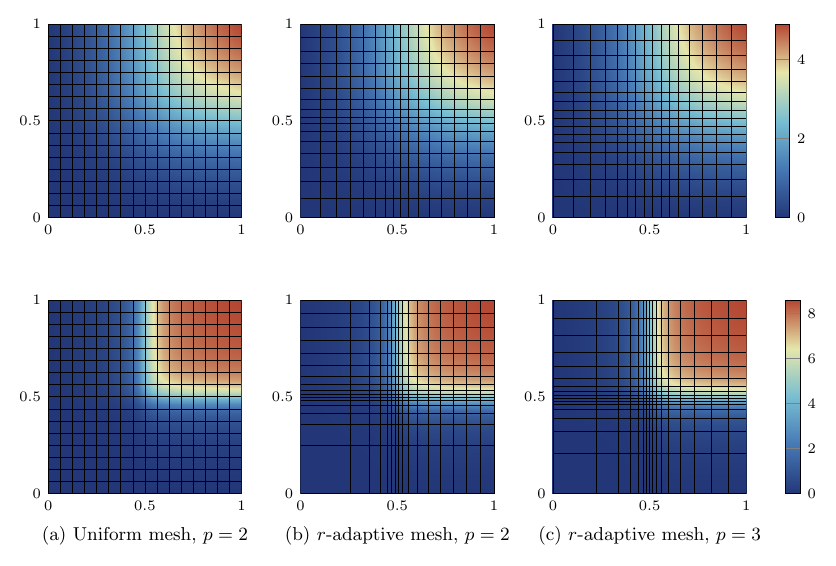}
    \caption{\textbf{Experiment~3.} Predicted meshes and solutions at \(N=16\) for two
      representative values of \(\nu=(t,s_1,s_2)\), one per row: upper panels~(a)--(c)
      correspond to \(\nu=(3.92,\,0.54,\,0.54)\) and lower panels~(a)--(c) to
      \((19.12,\,0.46,\,0.46)\).}
    \label{fig:P3_solutions}
\end{figure}

\begin{figure}[htbp]
    \centering
    \begin{subfigure}[t]{0.48\textwidth}
        \centering
        \includegraphics[scale=1]{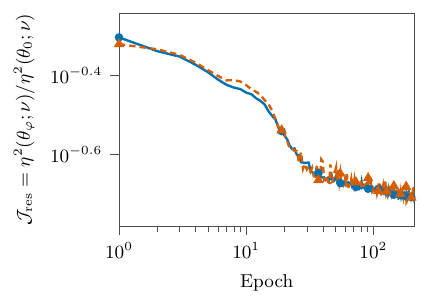}
        \caption{Quadratic B-splines (\(p=2\)).}
        \label{fig:loss_arctangent_p2}
    \end{subfigure}
    \hfill
    \begin{subfigure}[t]{0.48\textwidth}
        \centering
        \includegraphics[scale=1]{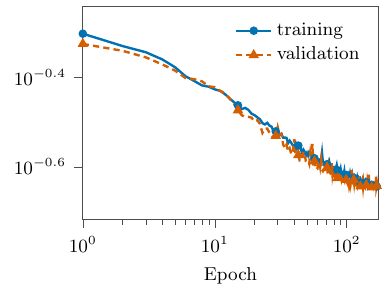}
        \caption{Cubic B-splines (\(p=3\)).}
        \label{fig:loss_arctangent_p3}
    \end{subfigure}
    \caption{\textbf{Experiment~3.} Training and validation losses \(\mathcal{J}_{\rm res}\) versus Adam epoch at the fixed refinement level \(N=4\), for (a)~\(p=2\) and (b)~\(p=3\).}
    \label{fig:loss_arctangent}
\end{figure}

\subsection{Two-dimensional L-shaped domain}
\label{subsec:exp4_lshape}
We now consider the model problem~\eqref{eq:general_pde} with \(\bs\beta=\alpha=0\) on the L-shaped domain \(\Omega=(0,1)^2\setminus\bigl([1/2,1]\times[0,1/2]\bigr)\),
\begin{equation}
\label{eq:lshape_pde}
-\nabla\cdot(\sigma\nabla u)=1 \quad\text{in }\Omega,
\qquad u=0 \quad\text{on }\partial\Omega.
\end{equation}
The diffusion coefficient is piecewise constant, taking the values \(1\), \(\sigma_1\), \(\sigma_2\) on the three sub-regions around the re-entrant corner \((1/2,1/2)\), with \(\sigma_1,\sigma_2\in[10^{-1},10^{1}]\). The corner forces a \(u\sim r^{2/3}\) singularity~\cite{grisvard1985elliptic} that caps the convergence rate on any quasi-uniform mesh, whatever the degree. The test is how much accuracy node relocation alone can recover at a fixed number of degrees of freedom. Instead of a conforming multipatch mesh, we use an immersed, fictitious-domain IGA on the embedding square \((0,1)^2\), in the spirit of the Finite Cell Method~\cite{parvizian2007finite,schillinger2015finitecell,schillinger2013isogeometric}: a single tensor-product spline space in which the corner lines \(x=1/2\), \(y=1/2\) are made \(C^{0}\) by raising the knot \(1/2\) to multiplicity \(p\), while the removed quadrant and outer boundary are pinned to zero. The cut lies on fixed knot lines, so the masking is exact and the mesh parametrization is that of the other experiments, with four knot-spacing vectors (one on each side of the two corner lines). Because the \(C^{0}\) cut makes the discretization non-conforming and the energy seminorm non-monotone~\cite{buffa2022mathematical}, we measure the error directly (a weak cut imposition~\cite{embar2010imposing} with the CutFEM error analysis~\cite{burman2012fictitious} would be sharper). Lacking a closed form, we take as reference \(u^\ast\) a heavily graded degree-\(p=5\) immersed solution carrying the same \(C^{0}\) cut, with an estimated accuracy of \(\lesssim2\times10^{-5}\) relative \(H^1\) within the immersed family (supported by a self-convergence study of the reference discretization included with the code); this lies at least one order of magnitude below the smallest reported error (\(2.34\times10^{-4}\); \(p=3\), \(N=64\), adapted) and more than two orders below the uniform-mesh errors, so the reference perturbs the finest adapted values by at most \(\sim9\%\) and does not affect the comparisons. We measure the \(\sigma\)-weighted \(H^1\) seminorm,
\begin{equation}
\label{eq:lshape_direct_metric}
|v|_{H^1}:=\Bigl(\int_\Omega\sigma\,|\nabla v|^{2}\,\dx\Bigr)^{1/2},
\qquad
H^1_\mathrm{rel}:=\frac{|u_{(\cdot)}-u^\ast|_{H^1}}{|u^\ast|_{H^1}},
\end{equation}
computed by Gauss--Legendre quadrature masked to the L-shape; for \(\bs\beta=\alpha=0\) it coincides with the energy norm~\eqref{eq:energy_norm}.
We use \(p\in\{2,3\}\), continuation levels \(N\in\{4,8,16,32\}\) per axis with evaluation to \(N=64\), and \(T=5\), \(h_{\min}=10^{-7}\). The uniform mesh converges at the singularity-limited rate dictated by the corner---fitted slopes over \(N\in[16,64]\) of \(\approx0.73\) for both degrees (Figure~\ref{fig:conv-problem4-p2p3})---while the adapted meshes reach substantially lower errors, with effective orders (endpoint slopes over \(N\in[4,64]\)) of \(1.87\) (\(p=2\)) and \(2.06\) (\(p=3\)) and error reductions at \(N=64\) of \(16.1\) and \(27.5\) times (Table~\ref{tab:lshape_summary}). The gain is not uniform in the budget: at the coarsest level \(N=4\) strong grading starves the bulk of the domain and the adapted error slightly exceeds the uniform one, the crossover to a net gain appearing from \(N=8\). For \(p=3\) the local slope rises to \(\approx3.05\) over the trained levels and then drops to \(1.14\) in the final \(N=32\to64\) step, where evaluation extends beyond the training range, though the cubic error stays below the quadratic at every level. The reduced order is not an artifact of relocation itself---Experiment~1 recovers the full \(N^{-p}\) rate for a point singularity---but structural: a single knot line at \(x=1/2\) cannot localize the corner without refining an entire strip, so a finite budget acts as an algebraically graded mesh of bounded effective order. Since proper grading recovers optimal rates for corner singularities~\cite{babuska1979direct,apel1999anisotropic,schwab1998phpfem}, the cap reflects the grading a tensor-product relocation affords rather than a fundamental limit, and a conforming multipatch discretization would inherit it through interface conformity. Three causes remain entangled---the tensor-product knot-line structure, the approximate \(C^{0}\) enforcement, and the grading cap (\(T\), \(h_{\min}\); Section~\ref{sec:mesh_param})---and we leave their separation, by a conforming discretization or a sensitivity study in \(T\) and \(h_{\min}\), to future work. We therefore claim only what the data show: the corner caps the adapted effective order near \(2\) for \(p=2\) and reduces the local \(p=3\) order toward \(1\) at the finest levels. In this immersed setting the estimator overestimates the energy error: the effectivity indices range from about \(6\) to \(48\) (Table~\ref{tab:lshape_summary}), because the \(C^{0}\) cut introduces flux-jump terms that inflate \(\eta\). The inflation is strongest on the coarse, strongly graded \(p=3\) meshes---where \(I_{\mathrm{eff}}\) reaches \(48\)---and relaxes under refinement, as the jump contribution scales differently from the true error and its relative weight shrinks. We therefore read \(\eta\) here as a differentiable mesh-quality functional rather than a sharp bound, and assess accuracy through the direct error~\eqref{eq:lshape_direct_metric}. Figures~\ref{fig:P4_solutions} and~\ref{fig:P4_gradients} show the mechanism---the four spacing vectors concentrate knots at the re-entrant corner, where the gradient is steepest---and the training histories at \(N=8\) (Figure~\ref{fig:loss_lshape}) show the residual loss decreasing to a stable plateau for both degrees.
\begin{figure}[htbp]
    \centering
    \begin{subfigure}[b]{0.49\textwidth}
        \centering
        \includegraphics[width=\linewidth]{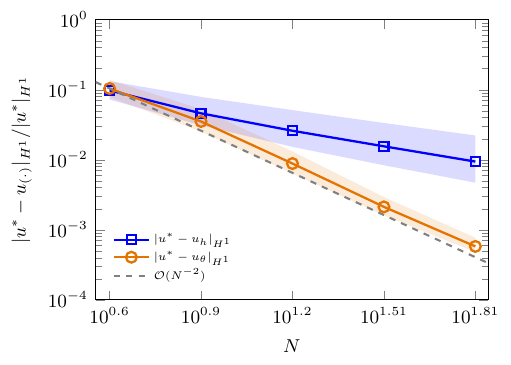}
        \caption{Quadratic B-splines (\(p=2\)).}
        \label{fig:conv-problem4-p2}
    \end{subfigure}
    \hfill
    \begin{subfigure}[b]{0.49\textwidth}
        \centering
        \includegraphics[width=\linewidth]{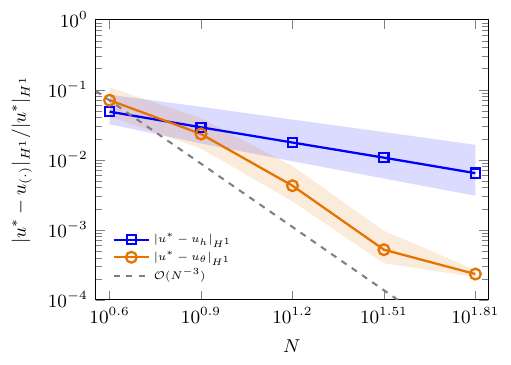}
        \caption{Cubic B-splines (\(p=3\)).}
        \label{fig:conv-problem4-p3}
    \end{subfigure}
    \caption{\textbf{Experiment~4.} Relative error in the \(\sigma\)-weighted
      \(H^1\) seminorm~\eqref{eq:lshape_direct_metric} versus the number of
      elements per axis \(N\) for the uniform and \(r\)-adaptive meshes (median
      over seeds and the held-out test parameters, measured against the
      high-degree immersed reference), trained with batch size~\(16\). The
      uniform rate is singularity-limited (fitted slopes \(\approx0.73\) over \(N\in[16,64]\))}
    \label{fig:conv-problem4-p2p3}
\end{figure}

\begin{figure}[htbp]
    \centering
    \includegraphics[width=\linewidth]{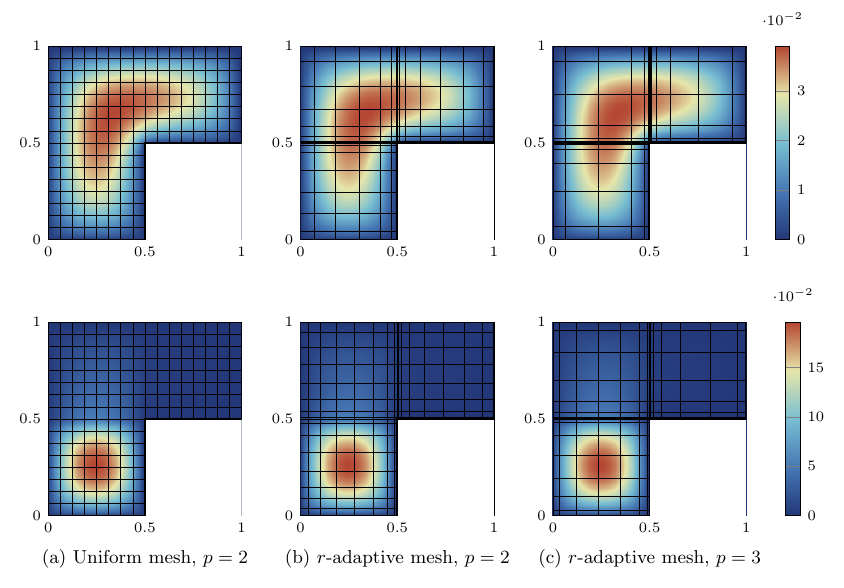}
    \caption{\textbf{Experiment~4.} Predicted meshes and solutions at \(N=16\) for two representative parameter pairs, one per row: upper panels~(a)--(c) correspond to \((\sigma_1,\sigma_2)=(0.89,\,0.89)\) and lower panels~(a)--(c) to \((10,\,0.1)\).}
      \label{fig:P4_solutions}
\end{figure}

\begin{figure}[htbp]
    \centering
    \includegraphics[width=\linewidth]{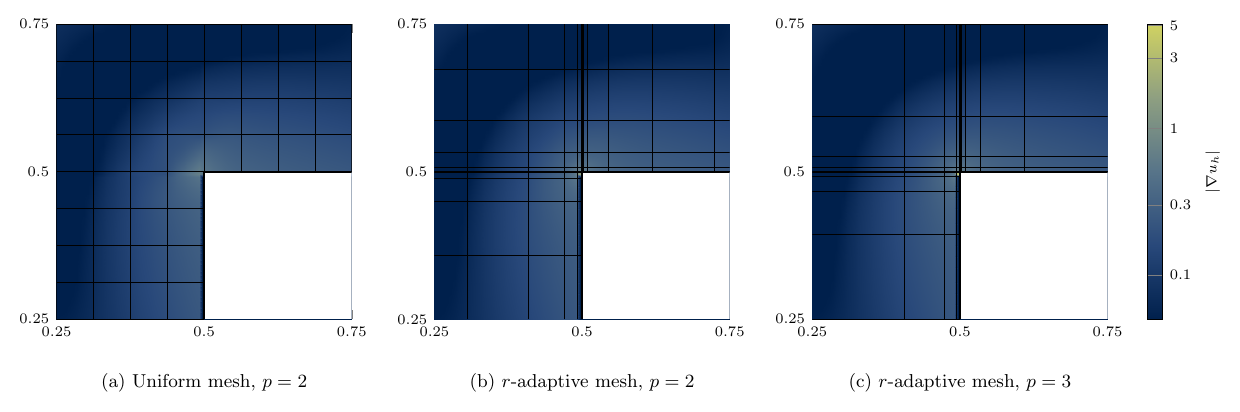}
    \caption{\textbf{Experiment~4.} Detail of the solution gradient near the re-entrant corner at \((\sigma_1,\sigma_2)=(0.89,\,0.89)\) and \(N=16\), with the knot lines superimposed. Panels~(a)--(c) share a common color scale.}
    \label{fig:P4_gradients}
\end{figure}

\begin{figure}[htbp]
    \centering
    \begin{subfigure}[t]{0.48\textwidth}
        \centering
        \includegraphics[scale=1]{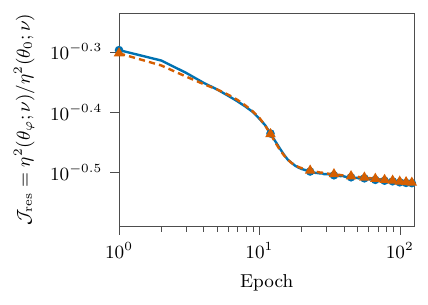}
        \caption{Quadratic B-splines (\(p=2\)).}
        \label{fig:loss_lshape_p2}
    \end{subfigure}
    \hfill
    \begin{subfigure}[t]{0.48\textwidth}
        \centering
        \includegraphics[scale=1]{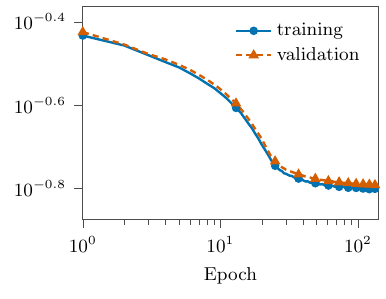}
        \caption{Cubic B-splines (\(p=3\)).}
        \label{fig:loss_lshape_p3}
    \end{subfigure}
    \caption{\textbf{Experiment~4.} Training and validation losses \(\mathcal{J}_{\rm res}\) versus Adam epoch at the fixed refinement level \(N=8\), for (a)~\(p=2\) and (b)~\(p=3\).}
    \label{fig:loss_lshape}
\end{figure}

\begin{table}[t]
\centering
\small
\caption{\textbf{Experiment 4.} Median relative $H^1$ error $|u^\ast-u_{(\cdot)}|_{H^1}/|u^\ast|_{H^1}$ and median effectivity index $I_{\mathrm{eff}}=\eta/|u^\ast-u_{(\cdot)}|_{H^1}$ per level $N$, for the uniform ($u_h$) and adapted ($u_\theta$) meshes, at $p=2$ and $p=3$.}
\label{tab:lshape_summary}
\begin{adjustbox}{max width=\textwidth}
\begin{tabular}{c cccc @{\hspace{1.0em}} cccc}
\toprule
& \multicolumn{4}{c}{$p=2$} & \multicolumn{4}{c}{$p=3$} \\
\cmidrule(lr){2-5} \cmidrule(lr){6-9}
& \multicolumn{2}{c}{$H^1_{\mathrm{rel}}$} & \multicolumn{2}{c}{$I_{\mathrm{eff}}$} & \multicolumn{2}{c}{$H^1_{\mathrm{rel}}$} & \multicolumn{2}{c}{$I_{\mathrm{eff}}$} \\
\cmidrule(lr){2-3} \cmidrule(lr){4-5} \cmidrule(lr){6-7} \cmidrule(lr){8-9}
$N$ & $u_h$ & $u_\theta$ & $u_h$ & $u_\theta$ & $u_h$ & $u_\theta$ & $u_h$ & $u_\theta$ \\
\midrule
4  & $9.78{\times}10^{-2}$ & $1.04{\times}10^{-1}$ & $8.8$ & $11.9$ & $4.87{\times}10^{-2}$ & $7.09{\times}10^{-2}$ & $11.1$ & $41.9$ \\
8  & $4.61{\times}10^{-2}$ & $3.53{\times}10^{-2}$ & $7.1$ & $14.5$ & $2.91{\times}10^{-2}$ & $2.34{\times}10^{-2}$ & $9.3$  & $48.0$ \\
16 & $2.60{\times}10^{-2}$ & $8.84{\times}10^{-3}$ & $6.1$ & $10.9$ & $1.76{\times}10^{-2}$ & $4.27{\times}10^{-3}$ & $9.0$  & $19.9$ \\
32 & $1.56{\times}10^{-2}$ & $2.13{\times}10^{-3}$ & $5.9$ & $10.5$ & $1.07{\times}10^{-2}$ & $5.21{\times}10^{-4}$ & $9.0$  & $8.8$  \\
64 & $9.43{\times}10^{-3}$ & $5.80{\times}10^{-4}$ & $5.9$ & $12.4$ & $6.44{\times}10^{-3}$ & $2.34{\times}10^{-4}$ & $9.2$  & $5.8$  \\
\bottomrule
\end{tabular}
\end{adjustbox}
\end{table}

\subsection{Two-dimensional advection--diffusion boundary layer}
\label{subsec:exp5_advection_diffusion}

We finally consider the model problem~\eqref{eq:general_pde} with constant diffusion
\(\sigma\equiv\varepsilon\), advection \(\bs\beta=(b,0)^\top\), and \(\alpha=0\) on
\(\Omega=(0,1)^2\),
\begin{equation}
\label{eq:exp5_strong}
-\varepsilon\,\Delta u_{\bs\nu}
+\bs\beta\cdot\nabla u_{\bs\nu}=f_{\bs\nu}
\quad\text{in }\Omega,
\qquad u_{\bs\nu}=0 \ \text{on }\partial\Omega,
\end{equation}
parametrized by \(\bs\nu=(\ell_\varepsilon,b)\in[-2.0,-1.5]\times[0.5,2.0]\) with
\(\varepsilon=10^{\ell_\varepsilon}\). The layer width
\(\delta_{\bs\nu}=\varepsilon/b\) varies by an order of magnitude across the
parameter set. Since the diffusion is small, the energy constants degrade with
\(\varepsilon\); we therefore read this as a fixed-resolution adaptation test, not a
claim of \(\varepsilon\)-robustness (Remark~\ref{rem:coefficient_regimes})
\cite{verfurth2005convectiondiffusion,sangalli2008robust,apel2011anisotropic}. The manufactured solution
\begin{equation}
\label{eq:exp5_exact}
u^*_{\bs\nu}(x,y)=x\bigl(1-e^{(x-1)/\delta_{\bs\nu}}\bigr)\sin(\pi y)
\end{equation}
exhibits an outflow boundary layer near \(x=1\), with characteristic thickness
\(\delta_{\bs\nu}\). Substituting it into the operator gives the source
\begin{equation}
\label{eq:exp5_source}
f_{\bs\nu}(x,y)=\sin(\pi y)\,\Bigl[\,b\bigl(1+e^{z}\bigr)
+\varepsilon\,\pi^2\,x\bigl(1-e^{z}\bigr)\,\Bigr],
\qquad z=\frac{x-1}{\delta_{\bs\nu}},
\end{equation}
where the steep \((xb^2/\varepsilon)e^{z}\) terms cancel because
\(\delta_{\bs\nu}=\varepsilon/b\). The solution vanishes on all four edges, so the
Dirichlet condition needs no lifting, and we integrate the sharp layer with a
high-order Gauss--Legendre rule. We use \(p\in\{2,3\}\) and \(N\) up to \(64\) per direction, and report the
effectivity index only on the diffusion-dominated sub-range
\(\ell_\varepsilon\in[-1.75,-1.5]\), where the dependence of the estimator
constants on \(\varepsilon\) is mildest. The results are consistent with the expected behavior
(Figure~\ref{fig:conv-problem5-p2p3}). For the convection-dominated cases the coarse
uniform mesh has a large cell P\'eclet number and oscillates near the layer, while
the adapted mesh resolves it. As \(N\) increases, the uniform-mesh error decreases,
but the adapted mesh continues to provide a smaller \(H^1\)-seminorm error. The
finest level \(N=64\) lies beyond the continuation training range (\(N\le32\)), so
it probes the zero-shot extrapolation of the positional network. The network covers
the full range of layer widths in a single forward pass. The effectivity indices in Table~\ref{tab:advdiff_summary} are \(O(1)\)--\(O(10)\),
stable under refinement, and comparable to the other experiments. Two scales
interact here: the estimator controls the energy norm, which for this problem
carries the weight \(\sigma=\varepsilon\), i.e.\ \(\|e\|_{\mathcal E}=\varepsilon^{1/2}|e|_{H^1}\),
while the index in~\eqref{eq:effectivity_index} divides by the unweighted \(H^1\)
seminorm. A sharp energy-norm estimator would thus give
\(\Ieff\sim\varepsilon^{1/2}\in[0.13,0.18]\) on the reported sub-range; the observed
values exceed this by one to two orders, quantifying the \(\varepsilon\)-dependence
of the estimator constants anticipated in
Remark~\ref{rem:coefficient_regimes}: measured against the energy-norm error, the
overestimation factor \(\varepsilon^{-1/2}\,\Ieff\) is \(\sim10\)--\(70\), comparable
to the corner-singular Experiment~4. As in Experiment~3, the layer-resolving meshes here are
anisotropic, so Proposition~\ref{prop:dual_reliability} is again heuristic
(Remark~\ref{rem:anisotropy}). Figures~\ref{fig:P5_solutions} and~\ref{fig:P5_cross_section} show the mechanism:
the predicted knots concentrate at the outflow layer, where the uniform mesh
oscillates. The training histories at \(N=4\) (Figure~\ref{fig:loss_advdiff}) show the residual loss decreasing to a stable plateau for both degrees.

\begin{figure}[htbp]
    \centering
    \begin{subfigure}[b]{0.49\textwidth}
        \centering
        \includegraphics[width=\linewidth]{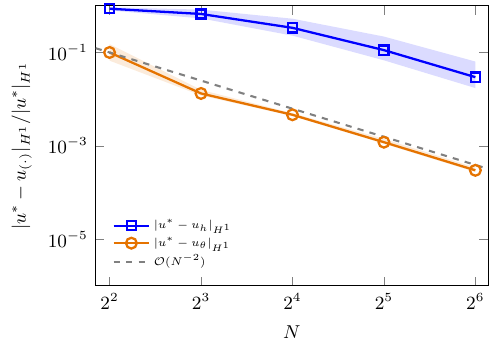}
        \caption{Quadratic B-splines (\(p=2\)).}
        \label{fig:conv-problem5-p2}
    \end{subfigure}
    \hfill
    \begin{subfigure}[b]{0.49\textwidth}
        \centering
        \includegraphics[width=\linewidth]{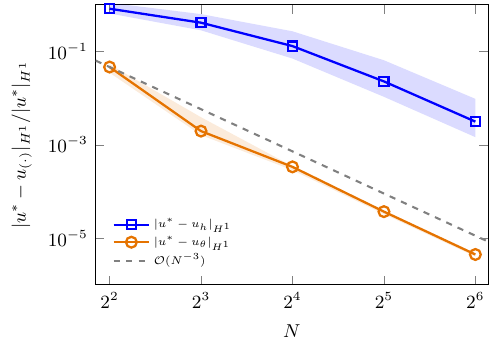}
        \caption{Cubic B-splines (\(p=3\)).}
        \label{fig:conv-problem5-p3}
    \end{subfigure}
    \caption{\textbf{Experiment~5.} Relative \(H^1\)-seminorm error versus \(N\) for
      the uniform and \(r\)-adaptive meshes (median over seeds and the held-out test
      parameters), trained with batch size~\(16\).}
    \label{fig:conv-problem5-p2p3}
\end{figure}

\begin{table}[t]
\centering
\small
\caption{\textbf{Experiment 5.} Median relative $H^1$ error $|u^\ast-u_{(\cdot)}|_{H^1}/|u^\ast|_{H^1}$ and median effectivity index $I_{\mathrm{eff}}=\eta/|u^\ast-u_{(\cdot)}|_{H^1}$ per level $N$, for the uniform ($u_h$) and adapted ($u_\theta$) meshes, at $p=2$ and $p=3$. Medians pool the four seeds and the held-out test parameters. The index divides the estimator by the unweighted \(H^1\) seminorm, while \(\eta\) controls the \(\varepsilon\)-weighted energy norm (see the text); \(\Ieff\) is reported only on \(\ell_\varepsilon\in[-1.75,-1.5]\).}
\label{tab:advdiff_summary}
\begin{adjustbox}{max width=\textwidth}
\begin{tabular}{c cccc @{\hspace{1.0em}} cccc}
\toprule
& \multicolumn{4}{c}{$p=2$} & \multicolumn{4}{c}{$p=3$} \\
\cmidrule(lr){2-5} \cmidrule(lr){6-9}
& \multicolumn{2}{c}{$H^1_{\mathrm{rel}}$} & \multicolumn{2}{c}{$I_{\mathrm{eff}}$} & \multicolumn{2}{c}{$H^1_{\mathrm{rel}}$} & \multicolumn{2}{c}{$I_{\mathrm{eff}}$} \\
\cmidrule(lr){2-3} \cmidrule(lr){4-5} \cmidrule(lr){6-7} \cmidrule(lr){8-9}
$N$ & $u_h$ & $u_\theta$ & $u_h$ & $u_\theta$ & $u_h$ & $u_\theta$ & $u_h$ & $u_\theta$ \\
\midrule
4 & $8.60{\times}10^{-1}$ & $9.68{\times}10^{-2}$ & $2.40$ & $7.66$ & $8.20{\times}10^{-1}$ & $6.82{\times}10^{-2}$ & $3.51$ & $6.85$ \\
8 & $6.64{\times}10^{-1}$ & $1.13{\times}10^{-2}$ & $1.80$ & $9.64$ & $4.12{\times}10^{-1}$ & $2.12{\times}10^{-3}$ & $2.30$ & $8.01$ \\
16 & $3.35{\times}10^{-1}$ & $4.29{\times}10^{-3}$ & $1.55$ & $6.18$ & $1.30{\times}10^{-1}$ & $3.15{\times}10^{-4}$ & $2.07$ & $4.14$ \\
32 & $1.12{\times}10^{-1}$ & $1.06{\times}10^{-3}$ & $1.55$ & $5.70$ & $2.28{\times}10^{-2}$ & $3.51{\times}10^{-5}$ & $2.07$ & $3.72$ \\
64 & $2.94{\times}10^{-2}$ & $2.63{\times}10^{-4}$ & $1.65$ & $5.73$ & $3.13{\times}10^{-3}$ & $4.35{\times}10^{-6}$ & $1.97$ & $3.70$ \\
\bottomrule
\end{tabular}
\end{adjustbox}
\end{table}

\begin{figure}[htbp]
    \centering
    \includegraphics[width=\linewidth]{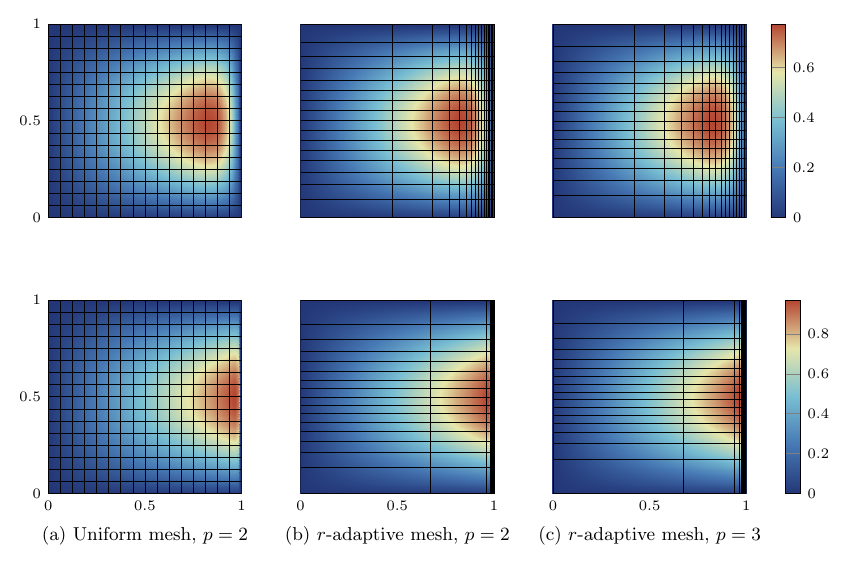}
    \caption{\textbf{Experiment~5.} Predicted meshes at \(N=16\) over the solution field,
      for two representative values of \(\bs\nu\) (one per row): upper panels~(a)--(c)
      correspond to \(\bs\nu=(-1.58,\,0.5)\) and lower panels~(a)--(c) to \(\bs\nu=(-1.95,\,2.0)\).}
    \label{fig:P5_solutions}
\end{figure}

\begin{figure}[htbp]
    \centering
    \includegraphics[width=\linewidth]{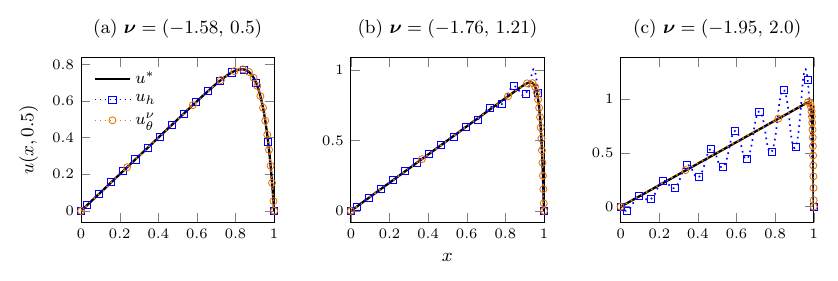}
    \caption{\textbf{Experiment~5.} Cross-section of the solution at \(y=0.5\)
      and \(N=16\) for \(p=2\), for three representative values of \(\bs\nu\),
      shown in panels~(a)--(c). Each panel compares the exact solution with
      its uniform and \(r\)-adaptive approximations; markers indicate the
      Greville abscissae of each mesh. The uniform mesh oscillates near
      \(x=1\), while the adapted mesh resolves the layer.}
    \label{fig:P5_cross_section}
\end{figure}

\begin{figure}[htbp]
    \centering
    \begin{subfigure}[t]{0.48\textwidth}
        \centering
        \includegraphics[scale=1]{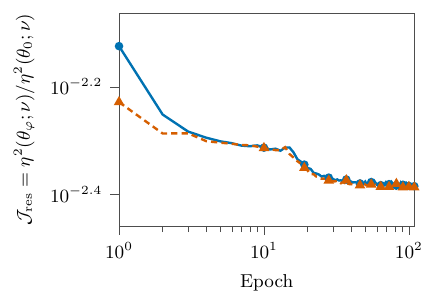}
        \caption{Quadratic B-splines (\(p=2\)).}
        \label{fig:loss_advdiff_p2}
    \end{subfigure}
    \hfill
    \begin{subfigure}[t]{0.48\textwidth}
        \centering
        \includegraphics[scale=1]{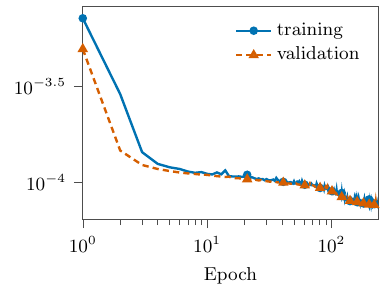}
        \caption{Cubic B-splines (\(p=3\)).}
        \label{fig:loss_advdiff_p3}
    \end{subfigure}
    \caption{\textbf{Experiment~5.} Training and validation losses \(\mathcal{J}_{\rm res}\) versus Adam epoch at the fixed refinement level \(N=4\), for (a)~\(p=2\) and (b)~\(p=3\).}
    \label{fig:loss_advdiff}
\end{figure}

\section{Conclusions}

\label{sec:conclusions}

We have introduced a neural \(r\)-adaptive IGA algorithm in which the physical solution is always computed by a standard Galerkin solve, while a neural network relocates the interior knots through a residual-based objective; the network does not replace the solver.

The central idea is the choice of the loss. A global strong-form residual, as minimized by PINNs, controls a norm stronger than the energy (\(H^1\)) error. Weighting the element residuals by the local mesh size and adding the interface flux jumps---the classical a posteriori construction---yields a computable estimator of the energy error, and this estimator is our training loss. In the coercive, shape-regular, conforming regime, the estimator is reliable and locally efficient (Proposition~\ref{prop:dual_reliability}, Theorem~\ref{thm:local_efficiency}); outside that regime it remains a well-defined mesh-quality functional. Since the loss requires no energy minimization principle, it extends differentiable \(r\)-adaptivity beyond Ritz formulations, which are restricted to symmetric coercive problems~\cite{aballay2025radaptivefiniteelementmethod,magueresse2025energy}, and covers the indefinite and advection-dominated problems tested here. Exact mesh gradients are obtained by the discrete adjoint, which reverse-mode AD applies to the linear solve at the cost of one extra solve.

In the parametric setting, the network maps each problem parameter to a knot-density function and predicts an adapted mesh in a single forward pass, with no per-instance optimization. Since the output is a continuous density rather than a fixed-size vector of knot positions, the network is independent of the element count and produces an admissible mesh at any refinement level, coarser or finer than those seen in training; this is what enables the coarse-to-fine continuation used throughout.

We observe in the numerical results that the adapted meshes concentrate degrees of freedom near singularities, interfaces, and boundary layers, improving accuracy at fixed cost. The main limitation is structural: each tensor-product knot line spans the whole domain, so localized refinement propagates along entire rows or columns, as seen in the re-entrant corner example (Section~\ref{subsec:exp4_lshape}). Two research directions follow naturally. First, coupling the present residual-driven \(r\)-adaptivity with hierarchical splines~\cite{giannelli2012thb,buffa2022mathematical} would remove the tensor-product restriction. Second, extending the framework to transient problems: within a time-stepping scheme, the network would predict a parameter- and time-dependent knot density, relocating the mesh as the solution features evolve---moving layers and traveling fronts being the natural targets---while the density representation keeps a single network across all time steps and refinement levels.

\section*{Acknowledgements}
Elias Car\'u has received funding from the European Union's Horizon Europe research and innovation programme under the Marie Sk\l{}odowska-Curie Action MSCA-DN-101119556 (IN-DEEP). David Pardo and Judit Mu\~noz-Matute have also received funding from the European Union's Horizon Europe research and innovation programme under the Marie Sk\l{}odowska-Curie Action MSCA-DN-101119556 (IN-DEEP), as well as from the Consolidated Research Group MATHMODE (IT1866-26) of the EHU given by the Department of Education of the Basque Government. David Pardo has also received funding from the following Research Projects/Grants: PID2023-146678OB-I00 funded by MICIU/AEI/10.13039/501100011033 and by FEDER, EU; BCAM Severo Ochoa accreditation of excellence CEX2021-001142-S funded by MICIU/AEI/10.13039/501100011033; Basque Government through the BERC 2022-2025 program; RUL-ET (KK-2024/00086), funded by the Basque Government through ELKARTEK; BCAM-IKUR-UPV/EHU, funded by the Basque Government IKUR Strategy and by the European Union NextGenerationEU/PRTR. Judit Mu\~noz-Matute has also received funding from the Research Project PID2023-146668OA-I00 and the grant RYC2023-045172-I funded by MICIU/AEI/10.13039/501100011033.
\printbibliography

\end{document}